\author[1]{Eduard Eiben}
\author[2]{William Lochet}
\author[3,2]{Saket Saurabh\footnote{This project has received funding from the European Research Council (ERC) under the European Union’s Horizon 2020 research and innovation programme (grant agreement No  819416).}}
\affil[1]{ {\small Royal Holloway, University of London, United Kingdom}}
\affil[2]{{\small University of Bergen, Norway }}
\affil[3]{ {\small The Institute of Mathematical Sciences, HBNI, India} }
\title{A Polynomial Kernel for Paw-Free Editing} 
\date{}
\theoremstyle{plain}
\newtheorem{theorem}{Theorem}
\newtheorem{lemma}[theorem]{Lemma}
\theoremstyle{definition}
\newcommand{\bigO}[1]{\ensuremath{{\mathcal O}\left(#1\right)}}
\newcommand{\hedit}{\textsc{$H$-free-Edge Editing}}
\newcommand{\pawedit}{\textsc{Paw-free-Edge Editing}}
\theoremstyle{plain}
\newcommand{\NP}{\textsf{NP}}
\newcommand{\coNP}{\textsf{coNP}}
\newcommand{\NPpoly}{\textsf{NP/poly}}
\newtheorem{rrule}{Reduction Rule}
\begin{document}
	
\maketitle
\begin{abstract}\label{Abstract}
	For a fixed graph $H$, the \hedit\ problem asks whether we can modify a given
	graph $G$ by adding or deleting at most $k$ edges such that the resulting graph
	does not contain $H$ as an induced subgraph. The problem is known to be
	NP-complete for all fixed $H$ with at least $3$ vertices and it admits a
    $2^{\bigO{k}}n^{\bigO{1}}$ algorithm. Cai and Cai [Algorithmica (2015) 71:731–757] showed that 
    \hedit\ does not admit a polynomial kernel whenever $H$
	or its complement is a path or a cycle with at least $4$ edges or a
	$3$-connected graph with at least $1$ edge missing. Their results suggest that
	if $H$ is not independent set or a clique, then \hedit\ admits polynomial
	kernels only for few small graphs $H$, unless $\coNP\in\NPpoly$. Therefore, resolving the kernelization of
	\hedit\ for small graphs $H$ plays a crucial role in obtaining a complete
	dichotomy for this problem. 
	In this paper, we positively answer the question of compressibility for one of
	the last two unresolved graphs $H$ on $4$ vertices.
	Namely, we give the first polynomial kernel for \pawedit\ with $\bigO{k^{6}}$
	vertices. 

\end{abstract}
	
	\newpage
\section{Introduction}\label{Section: Introduction}

For a family of graph $\mathcal{G}$, the general $\mathcal{G}$-\textsc{Graph Modification} 
problem ask whether we can modify a graph $G$ into a graph in $\mathcal{G}$ by
performing at most $k$ simple operations. Typical examples of simple operations that are 
well-studied in the literature include vertex deletion, edge deletion, edge addition, or
combination of edge deletion and addition. We call these problems
$\mathcal{G}$-\textsc{Vertex Deletion}, $\mathcal{G}$-\textsc{Edge Deletion},
$\mathcal{G}$-\textsc{Edge Addition}, and $\mathcal{G}$-\textsc{Edge Editing},
respectively. By a classic result by Lewis and Yannakakis~\cite{LewisY80},
$\mathcal{G}$-\textsc{Vertex Deletion} is \NP-complete for all non-trivial hereditary
graph classes. The situation is quite different for the edge modification problems.
Earlier efforts for edge deletion problems~\cite{ElmallahC88,Yannakakis81}, though
having produced fruitful concrete results, shed little light on a systematic answer, and
it was noted that such a generalization is difficult to obtain.

$\mathcal{G}$-\textsc{Graph Modification} problems have been extensively investigated for
graph classes $\mathcal{G}$ that can be characterized by a finite set of forbidden
induced subgraphs. We say that a graph is $\mathcal{H}$-free, if it does not contain any
graph in $\mathcal{H}$ as an induced subgraph. For this special case, the
\textsc{$\mathcal{H}$-free Vertex Deletion} problem is well understood. If $\mathcal{H}$
contains a graph on at least two vertices, then all of these problems are \NP-complete,
but admit $c^kn^{\bigO{1}}$ algorithm~\cite{Cai96}, where $c$ is the size of the largest
graph in $\mathcal{H}$ (the algorithms with running time $f(k)n^{\bigO{1}}$ are called
fixed-parameter tractable (FPT) algorithms~\cite{CyganFKLMPPS15,DowneyFellows13}). On
the other hand, the \NP-hardness proof of Lewis and Yannakakis~\cite{LewisY80} excludes
algorithms with running time $2^{o(k)}n^{\bigO{1}}$ under Exponential Time Hypothesis
(ETH)~\cite{ImpagliazzoP01}. Finally, as observed by Flum and Grohe~\cite{FlumG06} a
simple application of sunflower lemma~\cite{ErdosR60} gives a \emph{kernel} with
$\bigO{k^{c}}$ vertices, where $c$ is again the size of the largest graph in
$\mathcal{H}$. A kernel is a polynomial time preprocessing algorithm which outputs an
equivalent instance of the same problem such that the size of the reduced instance is
bounded by some function $f(k)$ that depends only on $k$. We call the function $f(k)$
the size of the kernel. It is well-known that any problem that admits an FPT algorithm
admits a kernel. Therefore, for problems with FPT algorithms one is interested in
polynomial kernels, i.e., kernels where size upper bounded by a polynomial function.

For the edge modification problems, the situation is more complicated. While all of
these problems also admit $c^kn^{\bigO{1}}$ time algorithm, where $c$ is the maximum
number of edges in a graph in $\mathcal{H}$~\cite{Cai96}, the \textsf{P} vs \NP\
dichotomy is still not known. Only recently Aravind et al.~\cite{AravindSS17b} gave the
dichotomy for the special case when $\mathcal{H}$ contains precisely one graph
$H$~\cite{AravindSS17b}. From the kernelization point of view, the situation is even 
more difficult. The reason is that deleting or adding an edge to a graph can introduce a
new copy of $H$ and this might further propagate. Hence, we cannot use the sunflower
lemma to reduce the size of the instance. Cai asked the question whether
\textsc{$H$-free Edge Deletion} admits a polynomial kernel for all graphs $H$~\cite{bodlaender2006open}. 
Kratsch and Wahlstr{\"{o}}m~\cite{KratschW13} showed
that this is probably not the case and gave a graph $H$ on $7$ vertices such that 
\textsc{$H$-free Edge Deletion} and \textsc{$H$-free Edge Editing} does not admit a
polynomial kernel unless $\coNP\subseteq \NPpoly$. Consequently, it was shown that this 
is not an exception, but rather a rule~\cite{CaiCai15,GuillemotHPP13}. Indeed the result by Cai and 
Cai~\cite{CaiCai15} shows that \textsc{$H$-free Edge Deletion}, \textsc{$H$-free Edge Addition}, 
and \hedit\ do not admit a polynomial kernel whenever $H$ or its complement is a path or a cycle 
with at least $4$ edges or a $3$-connected graph with at least $2$ edges missing. This suggests that 
actually the $H$-free modification problems with a polynomial kernels are rather rare and only for
small graphs $H$. For the graphs on $4$ vertices the kernelization of $H$-free edge
modification problems was open for last two graphs and their complements (see
Table~\ref{table:1}), namely paw and claw, and Cao et al.~\cite{CaoRSY18} conjectured
that all of these problems admit polynomial kernels. In this paper, we give kernels for
the first of the two remaining graphs, namely the paw.

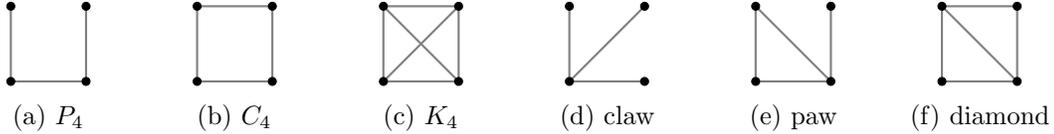
\begin{figure}
	\centering
	\begin{minipage}[b]{.15\linewidth}
		\centering
		\begin{tikzpicture}[scale=0.50]
		\draw[gray, thick] (-1,1) -- (1,1);
		\draw[gray, thick] (-1,1) -- (-1,3);
		\draw[gray, thick] (1,1) -- (1,3);
		\filldraw[black] (-1,1) circle (3pt);
		\filldraw[black] (-1,3) circle (3pt);
		\filldraw[black] (1,1) circle (3pt);
		\filldraw[black] (1,3) circle (3pt);
		\end{tikzpicture}
		\subcaption{$P_4$}\label{fig:1a}
	\end{minipage}%
	\begin{minipage}[b]{.15\linewidth}
	\centering
	\begin{tikzpicture}[scale=0.50]
	\draw[gray, thick] (-1,1) -- (1,1);
	\draw[gray, thick] (-1,1) -- (-1,3);
	\draw[gray, thick] (1,1) -- (1,3);
	\draw[gray, thick] (1,3) -- (-1,3);
	\filldraw[black] (-1,1) circle (3pt);
	\filldraw[black] (-1,3) circle (3pt);
	\filldraw[black] (1,1) circle (3pt);
	\filldraw[black] (1,3) circle (3pt);
	\end{tikzpicture}
	\subcaption{$C_4$}\label{fig:1a}
\end{minipage}%
	\begin{minipage}[b]{.15\linewidth}
	\centering
	\begin{tikzpicture}[scale=0.50]
	\draw[gray, thick] (-1,1) -- (1,1);
	\draw[gray, thick] (-1,1) -- (-1,3);
	\draw[gray, thick] (1,1) -- (1,3);
	\draw[gray, thick] (-1,1) -- (1,3);
	\draw[gray, thick] (1,1) -- (-1,3);
	\draw[gray, thick] (-1,3) -- (1,3);	
	\filldraw[black] (-1,1) circle (3pt);
	\filldraw[black] (-1,3) circle (3pt);
	\filldraw[black] (1,1) circle (3pt);
	\filldraw[black] (1,3) circle (3pt);
	\end{tikzpicture}
	\subcaption{$K_4$}\label{fig:1a}
\end{minipage}%
	\begin{minipage}[b]{.15\linewidth}
	\centering
	\begin{tikzpicture}[scale=0.50]
	\draw[gray, thick] (-1,1) -- (1,1);
	\draw[gray, thick] (-1,1) -- (-1,3);
	\draw[gray, thick] (-1,1) -- (1,3);
	\filldraw[black] (-1,1) circle (3pt);
	\filldraw[black] (-1,3) circle (3pt);
	\filldraw[black] (1,1) circle (3pt);
	\filldraw[black] (1,3) circle (3pt);
	\end{tikzpicture}
	\subcaption{claw}\label{fig:1a}
\end{minipage}%
	\begin{minipage}[b]{.15\linewidth}
	\centering
	\begin{tikzpicture}[scale=0.50]
	\draw[gray, thick] (-1,1) -- (1,1);
	\draw[gray, thick] (-1,1) -- (-1,3);
	\draw[gray, thick] (1,1) -- (1,3);
	\draw[gray, thick] (1,1) -- (-1,3);
	\filldraw[black] (-1,1) circle (3pt);
	\filldraw[black] (-1,3) circle (3pt);
	\filldraw[black] (1,1) circle (3pt);
	\filldraw[black] (1,3) circle (3pt);
	\end{tikzpicture}
	\subcaption{paw}\label{fig:1a}
\end{minipage}%
	\begin{minipage}[b]{.15\linewidth}
	\centering
	\begin{tikzpicture}[scale=0.50]
	\draw[gray, thick] (-1,1) -- (1,1);
	\draw[gray, thick] (-1,1) -- (-1,3);
	\draw[gray, thick] (1,1) -- (1,3);
	\draw[gray, thick] (1,3) -- (-1,3);
	\draw[gray, thick] (1,1) -- (-1,3);
	\filldraw[black] (-1,1) circle (3pt);
	\filldraw[black] (-1,3) circle (3pt);
	\filldraw[black] (1,1) circle (3pt);
	\filldraw[black] (1,3) circle (3pt);
	\end{tikzpicture}
	\subcaption{diamond}\label{fig:1a}
\end{minipage}%
	\caption{Graphs on $4$ vertices, their complements are omitted.}
	\label{Figure:1}
\end{figure}

\begin{table}
	\centering
	\begin{tabular}{l|ccc}
		\hline 
		$H$ & deletion & addition & editing \\
		\hline
		$K_4$ & $\bigO{k^4}$~\cite{Caithesis} & trivial & $\bigO{k^4}$~\cite{Caithesis} \\
		$P_4$ & $\bigO{k^3}$~\cite{GuillemotHPP13} & $\bigO{k^3}$~\cite{GuillemotHPP13} & $\bigO{k^3}$~\cite{GuillemotHPP13} \\
		diamond & $\bigO{k^3}$~\cite{SandeepS15} & trivial & $\bigO{k^8}$~\cite{CaoRSY18} \\ 
		paw & $\bigO{k^3}$~[this paper] & $\bigO{k^3}$~[this paper] & $\bigO{k^6}$~[this paper] \\
		\hline
		claw & open & open & open \\
		\hline 
		$C_4$ & no~\cite{GuillemotHPP13} & no~\cite{GuillemotHPP13} & no~\cite{GuillemotHPP13} 
	\end{tabular}
\caption{The kernelization results of $H$-free edge modification problems for $H$ being $4$-vertex graphs. Note that for a complement of $H$, the rows with deletion and addition are swapped, but otherwise the same results hold.}\label{table:1}
\end{table}

\subsection{Brief Overview of the Algorithm} Our main result is a polynomial kernel for
\pawedit. The key to obtain the kernel is a structural theorem by
Olariu~\cite{OLARIU198853} that states that every connected paw-free graph is either
triangle-free or complete multipartite graph. We start our kernelization algorithm
by finding a greedy edge-disjoint packing of paws in $G$. This clearly contains at most
$k$ paws and hence at most $4k$ vertices. Let us denote the set of these vertices by $S$.
The goal now is to bound the vertices in $G-S$.
Bounding the vertices belonging to the complete multipartite components of $G-S$ is rather 
simple. We show that every
vertex in $S$ is adjacent to at most $1$ complete multipartite component and for each
multipartite component, we can reduce the size of each part as well as the number of
these parts to $\bigO{k}$, else we can always find an irrelevant vertex that does not
appear in any solution. The triangle-free part is more tricky. The difficulty comes from
the fact that actually instead of keeping this part of the graph triangle-free, the
optimal solution might want to add some edges to make it complete multipartite. 
We are
however able to show that there is always optimal solution that keeps the vertices at
distance at least $5$ from $S$ in a triangle-free component. This structural claim helps us in looking 
for solution which are not too far away from $S$ ``in some sense''. 
Moreover, after some preprocessing of the instance, we can also show that the vertices with 
more than $4k+6$ neighbors inside the triangle-free components of $G-S$ cannot end up inside a 
complete multipartite component. It means that we can mark the relevant vertices in 
triangle-free components as follows. Set $S_0 := S$ and for every $i <5$, let $S_{i+1}$ 
be the set obtained by marking for each vertex of $S_{i+1}$, $4k+6$ neighbors at distance 
$i+1$ from $S$. The set of vertices marked is then $\bigO{k^6}$. Finally,
we can remove the vertices of triangle-free components which have not been 
marked. This is safe because these vertices are either too far from $S$ to belong to a complete multipartite component, 
or every way to connect these vertices to $S$ use vertices that can't end up in a complete multipartite
component of the reduce instance because of the degree condition. 
This gives us the desired kernel.


\section{Preliminaries}\label{Section: Preliminaries}
We assume familiarity with the basic notations and terminologies in graph theory. We refer the reader
to the standard book by Diestel~\cite{diestel} for more information. Given a graph $G$ and a set of 
pairs of vertices $A\in V(G)^2$, we denote by $G \Delta A$ the graph whose set of vertices is $V(G)$ 
and set of edges is the symmetric difference of $E(G)$ and $A$.
 
\smallskip
\noindent \emph{Parameterized Algorithms and Kernelization:} For a detailed illustration of the following facts the reader is
referred to~\cite{CyganFKLMPPS15,DowneyFellows13}.
A \emph{parameterized problem} is a language $\Pi \subseteq
\Sigma^*\times \mathbb{N}$, where $\Sigma$ is a finite alphabet; the second
component $k$ of instances $(I,k) \in \Sigma^*\times\mathbb{N}$ is called the
\emph{parameter}. A parameterized problem $\Pi$ is
\emph{fixed-parameter tractable} if it admits a
\emph{fixed-parameter algorithm}, which decides instances $(I,k)$ of
$\Pi$ in time $f(k)\cdot |I|^{\bigO{1}}$ for some computable function
$f$.  
 
A \emph{kernelization} for a parameterized
problem $\Pi$ is a polynomial-time algorithm that given any instance
$(I,k)$ returns an instance $(I',k')$ such that $(I,k) \in \Pi$ if and
only if $(I',k') \in \Pi$ and such that $|I'|+k'\leq f(k)$ for some
computable function $f$. The function $f$ is called the \emph{size}
of the kernelization, and we have a polynomial kernelization if $f(k)$
is polynomially bounded in $k$. It is known that a parameterized
problem is fixed-parameter tractable if and only if it is decidable
and has a kernelization. However, the kernels implied by this fact are
usually of superpolynomial size.

A \emph{reduction rule} is an algorithm that takes as input an
instance $(I,k)$ of {a parameterized problem $\Pi$} and outputs an instance
$(I',k')$ of the same problem. We say that the reduction rule is
\emph{safe} if $(I,k)$ is a \emph{yes}-instance if and only if $(I',k')$ is a \emph{yes}-instance. In order to describe our kernelization
algorithm, we present a series of reduction rules. 

We will need the following result describing the structure of paw-free graphs 
\cite{OLARIU198853}. 

\begin{theorem}\label{th:structure}
    $G$ is a paw-free graph if and only if each connected component of $G$ is triangle-free or
    complete multipartite. 
\end{theorem}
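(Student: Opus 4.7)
The plan is to handle the two directions separately.

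For the ``if'' direction, every paw contains a triangle, so triangle-free components contain no induced paw trivially. In a complete multipartite graph any induced triangle must occupy three distinct parts; a fourth vertex attached to exactly one of those triangle vertices (as the pendant of a paw) is impossible, since a vertex lying in a new fourth part would be adjacent to all three triangle vertices, while a vertex sharing one of the three existing parts would be adjacent to the two triangle vertices lying in the other two parts. Hence complete multipartite graphs are paw-free as well.

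For the ``only if'' direction, let $C$ be a connected paw-free subgraph and assume it contains a triangle $T=\{a,b,c\}$ (otherwise $C$ is already triangle-free). I want to show $C$ is complete multipartite, which is equivalent to $C$ containing no induced $K_1+K_2$, i.e., non-adjacency being transitive on distinct vertices of $C$.

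Two preparatory observations relative to $T$ will drive the argument. \textbf{(A)} For every $v\in V(C)\setminus T$, $|N(v)\cap T|\neq 1$: a vertex with a unique triangle neighbor would combine with $T$ to induce a paw. \textbf{(B)} Every $v\in V(C)\setminus T$ satisfies $|N(v)\cap T|\ge 2$; indeed if $|N(v)\cap T|=0$, take a shortest path $v=y_0,\dots,y_s$ from $v$ to $T$ in $C$, which exists by connectedness, and note that the assumption forces $s\ge 2$. By (A), $y_{s-1}$ has at least two neighbors in $T$, say $a$ and $b$, while minimality of the path makes $y_{s-2}$ non-adjacent to every vertex of $T$. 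Then $\{y_{s-2},y_{s-1},a,b\}$ induces a paw, a contradiction.

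Now suppose for contradiction that $\{u,v,w\}$ induces a $K_1+K_2$ in $C$ with $vw\in E(C)$ and $u$ non-adjacent to both $v$ and $w$. The pivotal remark is that any $x\in T$ with $x\sim v$ and $x\sim w$ must satisfy $x\not\sim u$, for otherwise $\{u,x,v,w\}$ induces a paw. Assuming $u,v,w\notin T$, (B) gives $|N(v)\cap T|,|N(w)\cap T|\ge 2$, whence $|N(v)\cap N(w)\cap T|\ge 1$, and the remark together with (B) applied to $u$ forces $|N(u)\cap T|=2$ and $N(v)\cap N(w)\cap T\subseteq T\setminus N(u)$. A short case split on whether $|N(v)\cap T|$ and $|N(w)\cap T|$ each equal $2$ or $3$ then produces a concrete induced paw on four vertices from $\{u,v,w\}\cup T$. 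The residual cases where one of $u,v,w$ lies in $T$ are handled by the same observations; for example if $u\in T$ then (B) forces $N(v)\cap T=N(w)\cap T=T\setminus\{u\}$, yielding a triangle through $v$ and $w$ with $u$ as a pendant. The step I expect to be the main obstacle is this final case analysis: without first extracting (A) and (B) the number of adjacency patterns to check would balloon, whereas pinning them down reduces everything to a handful of concrete configurations, each ruled out by exhibiting a fresh induced paw.
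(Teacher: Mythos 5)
Your proof is correct, but there is nothing in the paper to compare it against: the paper does not prove Theorem~\ref{th:structure} at all, it simply imports Olariu's result by citation. Your argument is a clean self-contained version of the standard elementary proof: reduce ``complete multipartite'' to the absence of an induced $K_1+K_2$ (equivalently, transitivity of non-adjacency), fix a triangle $T=\{a,b,c\}$ in a connected component, and use the two observations that no vertex outside $T$ has exactly one neighbour in $T$ (else $T$ plus that vertex is a paw) and, via a shortest path to $T$, that every vertex of the component has at least two neighbours in $T$. The only compressed step, your ``short case split,'' does indeed close, and in fact collapses to a single configuration: once $N(v)\cap N(w)\cap T$ is a single vertex, say $c$, and $u\sim a,b$, $u\not\sim c$, neither $v$ nor $w$ can be adjacent to all of $T$ (that would create a second common neighbour), so up to symmetry $v\sim a$, $v\not\sim b$, $w\sim b$, $w\not\sim a$, and then $\{u,a,b,v\}$ induces a paw (triangle $u,a,b$ with pendant $v$ attached at $a$). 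The residual cases are equally quick: if two of $u,v,w$ lay in $T$ they would be adjacent, contradicting the choice of $u$; if $u=a\in T$ then $v,w\sim b,c$ and $\{v,w,b,a\}$ is a paw, as you note; and if $v=a\in T$ with $u,w\notin T$ then $u\sim b,c$ and either $w$ meets $\{b,c\}$ in exactly one vertex, giving a paw on $\{u,b,c,w\}$, or $w\sim b,c$, giving the paw $\{a,w,b,u\}$. So the proposal is a complete and correct proof of the cited structure theorem.
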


To make a clear distinction between these two cases, we will say that a graph is 
a complete multipartite graph if it contains at least three parts. In particular, it contains a 
triangle. 

\section{Reduction Rules}

From now on $(G,k)$ will be an instance of paw-free editing and we assume $k >0$. 
Let us first describe two rules which can be safely applied. 

\begin{rrule}\label{rrule:twins}
If $X$ is an independent set of $k+3$ vertices with the same neighborhood, remove a vertex $x \in X$ from the graph.
\end{rrule}
\begin{proof}[Proof of Safeness]
	Suppose $(G,k)$ is an instance of the paw-free editing problem and  $X$ is an independent set of 
	$k+3$ vertices with the same neighborhood. Let $G'$ be the graph obtained by removing a vertex of 
    $X$. We need to show that $(G',k)$ has a solution if and only if $(G,k)$ has one. Since $G'$ is a
    subgraph of $G$, it is clear that if $(G,k)$ has a solution, then so does $(G',k)$. 
    Let $A$ be a solution to $(G',k)$ and assume $G \Delta A$ contains a paw $x_1, x_2, x_3, x_4$ with $x_1, x_2, 
	x_3$ being a triangle and $x_4$ being adjacent to $x_3$. Because $A$ is a solution to $(G',k)$, 
	it means that one of the $x_i$ must be the vertex $x$ that we removed from $G$. Moreover, 
    at most two of the other vertices of $X$ belong to the paw, as $x$ is adjacent to at least one vertex
    and $X$ is an independent set. If only one other vertex of $X$ 
	belongs to it, consider the other $k+1$ vertices of $X$ which are not in the paw. They all have 
	the same neighborhood in the paw as $x$, so $A$ must contain for each of them at least one edge 
	with the paw, or we could replace $x$ with this vertex in the paw, which contradicts the fact 
	that $A$ is a solution of $(G',k)$. However, since $A$ is smaller than $k+1$ we reach a 
	contradiction. If two other vertices of $X$ belong to the paw, then it means that $x = x_4$ and 
	these vertices are $x_1$ and $x_3$. Moreover it means that the edge $x_1x_3$ must be edited as 
	$X$ is an independent set. In that case, consider the other $k$ vertices of $X$ which are not in the paw. 
	Again, for each of them, the solution must contains an edge with the paw, but since $|A \setminus
	( x_1 x_3) |< k$, we also reach a contradiction. Overall this implies that Rule 1 is safe. 
\end{proof}

Following analogous arguments for the case when $X$ induces a complete multipartite graph with at 
least $k+5$ parts, we also obtain safeness of the following rule. 
\begin{rrule}\label{rrule:large_complete_multipartite}
If $X$ is a complete multipartite subgraph with $k+5$ parts having the same neighborhood outside of 
$X$, then remove the smallest part of $X$ from the graph.
\end{rrule}

\begin{proof}[Proof of Safeness]
    Suppose $(G,k)$ is an instance of the paw-free editing problem and $X$ is a complete multipartite 
    subgraph with $k+5$ parts having the same neighborhood outside of $X$. Let $G'$ be the graph 
    obtained by removing the smallest part $P$ of $X$. 
    We need to show that $(G',k)$ has a solution if and only if $(G,k)$ has one. Let $A$ be a
	solution to $(G',k)$ and assume $G \Delta A$ contains a paw $x_1, x_2, x_3, x_4$ with $x_1, x_2, 
	x_3$ being a triangle and $x_4$ being adjacent to $x_3$. Because $A$ is a solution to $(G',k)$, 
    it means that one of the $x_i$ must belong to $P$. Moreover, since the vertices in $P$ have 
    exactly the same neighborhood in $G$ and they form an independent set, this paw can contain at most 
    one vertex from $P$. Let us call $x$ this vertex. 
    Since $X$ consists of $k +5$ parts, it means that there exists $k+1$ 
    parts different from $P$ and without a vertex in this paw. However we know that any vertex in 
    these parts has the exact same neighborhood as $x$ inside the paw. This means that each of these 
    vertices must be adjacent in $A$ to the paw, or we can replace $x$ with a vertex belonging to $G'$,
    which is a contradiction. However, since there is at least $k+1$ of these vertices and $|A| = k$, 
    we reach a contradiction.
\end{proof}

Note that if there exists a set $X$ for which Reduction Rule~\ref{rrule:twins} can be applied, then this set can be found in polynomial time. 
Therefore from now on we assume that $(G,k)$ is an instance where 
Reduction Rule~\ref{rrule:twins} cannot be applied. 
Let $\mathcal{H}$ be a maximal packing of edge-disjoint paws and $S$ the set of vertices
appearing in $\mathcal{H}$.

We will now introduce two new rules.

\begin{rrule}\label{rrule:triangles_sharing_edge}
	If there is a pair of adjacent vertices $s_1, s_2$ with $4k+6$ common neighbors
	in the triangle-free components of $G-S$, 
	then remove the edge $s_1,s_2$ and set $k:= k-1$. 
\end{rrule}

The soundness of Reduction Rule~\ref{rrule:triangles_sharing_edge} is implied by the following Lemma:

\begin{lemma}\label{lem:triangle_2}
    Suppose Reduction Rule~\ref{rrule:twins} cannot be applied anymore and 
    let $s_1, s_2$ be two adjacent vertices. If there are more than $4k+6$ vertices belonging to the 
    triangle-free components of $G-S$ adjacent to 
    both $s_1$ and $s_2$, then either $(G,k)$ is a \emph{no}-instance, or any solution uses the edge 
    $s_1s_2$.  
\end{lemma}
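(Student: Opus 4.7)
I would argue by contradiction. Assume $A$ is a solution to $(G,k)$ with $|A|\le k$ and $s_1s_2\notin A$; let $V$ be the set of common neighbors of $s_1,s_2$ in triangle-free components of $G-S$, so $|V|\ge 4k+7$. Let $\alpha$ and $\beta$ be the numbers of $A$-edges incident to $s_1$ and $s_2$, respectively. Since $s_1s_2\notin A$, these edge families are disjoint and $\alpha+\beta\le|A|\le k$. Define $V'\subseteq V$ as those $v\in V$ with $vs_1,vs_2\notin A$, so $|V'|\ge|V|-(\alpha+\beta)$. For each $v\in V'$ the triangle $s_1s_2v$ survives in $G\Delta A$, so by Theorem~\ref{th:structure} the component $C$ of $G\Delta A$ containing $s_1,s_2$ is complete multipartite, contains $V'$, and each $v\in V'$ lies in a part of $C$ distinct from the parts $P_1,P_2$ containing $s_1,s_2$.

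Next I would carefully bound \emph{untouched} vertices of $V'$ (those incident to no edge of $A$). The key observation is that any $A$-edge incident to $s_1$ or $s_2$ has its other endpoint outside $V'$ (by definition of $V'$), hence does not touch $V'$. So vertices of $V'$ can only be touched by the at most $k-(\alpha+\beta)$ remaining edges of $A$, each contributing at most two $V'$-endpoints; thus at most $2(k-(\alpha+\beta))$ vertices of $V'$ are touched, leaving at least $|V'|-2(k-(\alpha+\beta))\ge|V|-2k+(\alpha+\beta)\ge|V|-2k\ge 2k+7$ untouched. For any untouched $v\in V'$ in part $Q$ of $C$, one has $N_G(v)=N_{G\Delta A}(v)=V(C)\setminus Q$.

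Two structural observations then yield the contradiction. First, if some part of $C$ contained $\ge k+3$ untouched $V'$-vertices, those would form an independent set of $k+3$ vertices in $G$ with a common neighborhood, enabling Reduction Rule~\ref{rrule:twins} to be applied to $G$ — contradicting our standing assumption that it cannot. Second, no three untouched $V'$-vertices can occupy three distinct parts of $C$: being untouched and pairwise cross-part in $G\Delta A$ they would be mutually adjacent in $G$, so they would share a component of $G-S$; but that component is triangle-free by the defining property of $V$, making such a triangle impossible. Combining the two, the $\ge 2k+7$ untouched $V'$-vertices live in at most two parts of $C$, each of which contains at most $k+2$ of them, forcing $2k+7\le 2(k+2)=2k+4$ — the desired contradiction.

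The delicate step, and the one easiest to underestimate, is the counting of untouched vertices of $V'$: one has to notice that edges of $A$ incident to $\{s_1,s_2\}$ shrink the set of good common neighbors (from $V$ down to $V'$) but do not touch the survivors, giving the sharper bound $|V|-2k+(\alpha+\beta)$ rather than the naive $|V'|-2k$. Once that is in place, the rest of the argument is essentially a two-step contradiction using Reduction Rule~\ref{rrule:twins} and triangle-freeness of the components of $G-S$.
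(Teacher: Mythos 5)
Your proof is correct and follows essentially the same route as the paper's: at least $2k+6$ (in your count, $2k+7$) common neighbors are untouched by $A$, they land in the complete multipartite component of $G\Delta A$ containing the surviving edge $s_1s_2$, triangle-freeness of the components of $G-S$ confines them to at most two parts, and a pigeonhole argument produces $k+3$ false twins contradicting the non-applicability of Reduction Rule~\ref{rrule:twins}. The extra bookkeeping with $V'$, $\alpha$, $\beta$ is harmless but unnecessary, since the naive bound of at most $2k$ touched vertices of $V$ already suffices, exactly as in the paper.
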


\begin{proof}
    Suppose there is a solution $A$ not using the edge $s_1s_2$. Because $s_1$ and $s_2$ have $4k+6$
    common neighbors in $G$, it means that they belong to a triangle and thus
    to a complete multipartite component of $G \Delta A$. Because $|A| = k$, 
    we know that at least $2k+6$ of the common neighbors of $s_1$ and $s_2$ are not adjacent to any 
    edge in $A$. This means that these vertices belong to the same component in $G \Delta A$, 
    and moreover they can only be in two different parts as they belong to the triangle-free components
    of $G-S$. This means that $k+3$ of these vertices belong to the same part of a complete 
    multipartite component of  $G \Delta A$ and since they are not incident to any edge in $A$, they
    have the same neighborhood in $G$. Therefore, we could have applied Reduction Rule~\ref{rrule:twins}. 
\end{proof}

\begin{rrule}\label{rrule:large_part}
	If $C$ is a complete multipartite component of $G-S$ and $P_1$ is a part of 
	$C$ with more than $3k+3$ vertices, then remove all the edges between the other parts of $C$ and 
	decrease $k$ by the amount of edges removed. If this amount is greater than $k$, answer \emph{no}.
\end{rrule} 

The fact that Reduction Rule~\ref{rrule:large_part} is safe is implied by the following Lemma:

\begin{lemma}
    Suppose Reduction Rule~\ref{rrule:twins} cannot be applied anymore and assume $C$ is a complete multipartite component of 
    $G-S$. If one part of $C$ is larger than $3k+3$, then either $(G,k)$ is a \emph{no}-instance, or any
    solution will remove all the edges between the other parts of $C$.
\end{lemma}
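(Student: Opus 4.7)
The plan is to proceed by contradiction: assume there is a solution $A$ with $|A|\leq k$ that does not remove every edge between two distinct non-$P_1$ parts of $C$. Then there exist $u\in P_i$ and $v\in P_j$ with $i\neq j$ and $i,j\neq 1$ such that $uv\in E(G\Delta A)$. My goal is to use $u$ and $v$ to expose an independent set in $G$ to which Reduction Rule~\ref{rrule:twins} would apply, contradicting our assumption that it is no longer applicable.

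First I would extract the untouched vertices of $P_1$. Since $|P_1|>3k+3$ and at most $2k$ vertices can be incident to the at most $k$ edges of $A$, there is a set $W\subseteq P_1$ of size at least $k+4$ whose vertices are not incident to any edge of $A$. Each $w\in W$ is adjacent in $G$ to both $u$ and $v$ (because $C$ is complete multipartite and $P_1,P_i,P_j$ are pairwise distinct parts), and these edges survive in $G\Delta A$ since $w$ is untouched by $A$. Together with the edge $uv$, this yields a triangle $\{u,v,w\}$ in $G\Delta A$ for every $w\in W$.

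Next I would invoke Theorem~\ref{th:structure}: the connected component $D$ of $G\Delta A$ containing $u$ and $v$ must be complete multipartite, since it contains a triangle. All vertices of $W$ lie in $D$, they form an independent set in $G\Delta A$ (being non-adjacent in $G$ and incident to no edge of $A$), and therefore all of them belong to a single part $Q$ of the multipartite structure of $D$. Consequently, $N_{G\Delta A}(w)=V(D)\setminus Q$ for every $w\in W$, and because $w$ is incident to no edge of $A$ we also have $N_G(w)=V(D)\setminus Q$. Hence the $|W|\geq k+3$ vertices of $W$ form an independent set with a common neighborhood in $G$, so Reduction Rule~\ref{rrule:twins} would apply — the desired contradiction.

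The main delicate point is making the equality $N_G(w)=V(D)\setminus Q$ hold globally in $G$ rather than only inside $D$; this works because $D$ is a connected component of $G\Delta A$ (so $w$ has no $G\Delta A$-neighbors outside $D$) and because $w$'s incident edges are identical in $G$ and in $G\Delta A$ (so its $G$-neighborhood is exactly the same as its $G\Delta A$-neighborhood). Everything else reduces to careful counting and a direct appeal to Olariu's structural theorem.
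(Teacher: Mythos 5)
Your proof is correct and follows essentially the same route as the paper: both arguments fix an unremoved edge between two non-$P_1$ parts, note that more than $k+3$ vertices of $P_1$ are untouched by the solution and still form triangles with that edge, invoke Olariu's theorem to place these untouched vertices in a single part of a complete multipartite component of $G\Delta A$, and derive a contradiction with the inapplicability of Reduction Rule~\ref{rrule:twins}. Your write-up merely spells out the twin-neighborhood step (that $N_G(w)=N_{G\Delta A}(w)$ is determined by the part) more explicitly than the paper does.
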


\begin{proof}
    Let $P_1$ be a part of $C$ of size greater than $3k+3$ and let $s_1, s_2$ be two adjacent vertices 
    of $C - P_1$. Let $A$ be a solution of size at most $k$ which does not use the edge $s_1s_2$.
    $A$ is incident to at most $2k$ vertices, so it means that at least $k+3$ vertices of $P_1$ are
    not incident to any edge of $A$. Moreover, since $s_1s_2$ is not in $A$, these $k+3$ vertices belong
    to the same part of a complete multipartite component of $G \Delta A$ and thus have the same
    neighborhood in $G$. This is a contradiction, as Reduction Rule~\ref{rrule:twins} cannot be applied anymore. 
\end{proof}

Note also that if Reduction Rules~\ref{rrule:triangles_sharing_edge}~and~\ref{rrule:large_part} 
can be applied, then it is possible to do it in polynomial time.
From now on assume that none of these rules can be applied. 

\section{Bounding the Complete Multipartite Components}
The next two lemmas allow us to bound the number of vertices belonging to complete multipartite 
components of $G-S$.

\begin{lemma}\label{lem:mcc_size}
Let $C$ denote a complete multipartite component of $G - S$. If $|C| \geq (3k+3)(3k+5)$, then either
Reduction Rule~\ref{rrule:large_complete_multipartite} can be applied or $(G,k)$ is a
\emph{no}-instance. Moreover, if Reduction Rule~\ref{rrule:large_complete_multipartite} can be applied, then it can
be done in polynomial time.
\end{lemma}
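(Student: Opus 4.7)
Plan: The plan is to show that if $|C| \geq (3k+3)(3k+5)$, then either $(G,k)$ is a no-instance, or Reduction Rule~\ref{rrule:large_complete_multipartite} can be applied, and the corresponding structure can be detected in polynomial time. The core engine is the maximality of the edge-disjoint paw packing $\mathcal{H}$: since every edge of a paw of $\mathcal{H}$ lies within $S$, any paw of $G$ using at most one vertex of $S$ would be edge-disjoint from $\mathcal{H}$, contradicting maximality. Hence no paw of the form $\{s,v,u,w\}$ with $s\in S$ and $v,u,w\in C$ exists in $G$. I may also assume $|\mathcal{H}|\leq k$, for otherwise any solution misses a paw of $\mathcal{H}$ and $(G,k)$ is immediately a no-instance; in particular $|S|\leq 4k$.

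From this single observation I extract two structural properties of every $s\in S$. Property (i): the $C$-neighbors of $s$ form a union of entire parts of $C$. Indeed, suppose $s$ is adjacent to $v\in P_i$ but not to $w\in P_i$; if $s$ is adjacent to some $u$ in another part $P_j$, then $\{s,v,u,w\}$ induces a paw (triangle on $\{s,v,u\}$, pendant $w$ adjacent only to $u$), while if $s$ is not adjacent to any $C$-vertex outside $P_i$, then any $u\in P_j, u'\in P_l$ with $j,l\neq i$ distinct give a paw $\{s,v,u,u'\}$ (triangle $\{v,u,u'\}$, pendant $s$). Property (ii): writing $I_s\subseteq\{1,\ldots,m\}$ for the set of parts of $C$ entirely adjacent to $s$, where $m$ is the number of parts of $C$, we must have $|I_s|\in\{0,m-1,m\}$; otherwise, for any $i\in I_s$ and distinct $j,l\notin I_s$, any $v_i\in P_i, v_j\in P_j, v_l\in P_l$ produce the paw $\{s,v_i,v_j,v_l\}$ (triangle in $C$, pendant $s$). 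All of these paw-constructions contradict the maximality of $\mathcal{H}$.

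By (i), every part $P_i$ of $C$ is an independent set of $G$-twins (its vertices share the same $C$-neighborhood $C\setminus P_i$ and the same $S$-neighborhood), so Reduction Rule~\ref{rrule:twins}'s inapplicability forces $|P_i|\leq k+2$. A direct calculation gives $(3k+3)(3k+5)\geq(5k+5)(k+2)$ (the difference is $(4k+5)(k+1)\geq 0$), hence $m\geq 5k+5$. Partition $S$ into $S^\ast:=\{s:|I_s|=m\}$, $S^{\ast\ast}:=\{s:|I_s|=m-1\}$, and the rest; for $s\in S^{\ast\ast}$ let $j(s)$ denote the unique part omitted by $s$. The sets $\{s\in S^{\ast\ast}:j(s)=i\}$ are pairwise disjoint over $i$, so at most $|S^{\ast\ast}|\leq|S|\leq 4k$ indices $i$ have a nonempty such set, leaving at least $m-4k\geq k+5$ indices $i$ for which every vertex of $P_i$ has $S$-neighborhood in $G$ equal to $S^\ast\cup S^{\ast\ast}$.

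Let $X$ be the union of any $k+5$ such parts. Then $G[X]$ is a complete multipartite subgraph with the $P_i$'s as its parts, and every vertex of $X$ has the same external neighborhood $(C\setminus X)\cup S^\ast\cup S^{\ast\ast}$ in $G$; hence $X$ is a complete multipartite module with $k+5$ parts and Reduction Rule~\ref{rrule:large_complete_multipartite} applies. To identify $X$ in polynomial time, we compute each $C$-vertex's $S$-neighborhood, group the parts of $C$ by common $S$-neighborhood, and pick $k+5$ parts from the largest group. The main obstacle is verifying properties (i) and (ii) via the paw-construction case analysis that exploits the maximality of $\mathcal{H}$; once these structural constraints are in hand, the rest reduces to elementary pigeonhole against $|S|\leq 4k$.
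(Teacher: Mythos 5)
Your proof is correct, but it follows a genuinely different route from the paper's. The paper's argument is solution-based: since Reduction Rule~\ref{rrule:large_part} is inapplicable, each part of $C$ has at most $3k+3$ vertices, so $C$ has at least $3k+5$ parts; a hypothetical solution of size $k$ touches at most $2k$ parts, and the $k+5$ untouched parts are taken as the candidate for Reduction Rule~\ref{rrule:large_complete_multipartite} (otherwise the instance is declared \emph{no}). You instead argue solution-independently: maximality of $\mathcal{H}$ forbids any paw with at most one vertex in $S$, which forces each $s\in S$ to see a union of whole parts of $C$ and, moreover, either no part, all parts, or all but one part; combined with the inapplicability of Reduction Rule~\ref{rrule:twins} this caps each part at $k+2$ vertices, yields at least $5k+5$ parts, and a pigeonhole against $|S|\le 4k$ produces $k+5$ parts whose vertices all have $S$-neighborhood exactly $S^\ast\cup S^{\ast\ast}$, so the rule provably applies and the set $X$ is found by simply grouping parts by their $S$-neighborhoods. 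What your approach buys is precisely the verification that the chosen $X$ has \emph{the same neighborhood outside of $X$}, a point the paper's candidate leaves implicit (untouched parts automatically keep their neighborhoods, but an untouched vertex of $S$ sitting in the same part of the multipartite component of $G\Delta A$ as one of the chosen parts could a priori distinguish their $S$-neighborhoods; your properties (i)--(ii) rule this asymmetry out up to at most $|S|$ exceptional parts). The price is that you additionally need $|S|\le 4k$, i.e.\ $|\mathcal{H}|\le k$, which the paper's proof of this lemma does not invoke; this is harmless here because the kernelization algorithm rejects when $|\mathcal{H}|>k$ before the lemma is ever applied (and your justification of that rejection is the same assertion the paper itself makes), and because the lemma's conclusion anyway allows the \emph{no}-instance escape. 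Note also that your argument never uses Reduction Rules~\ref{rrule:triangles_sharing_edge} or~\ref{rrule:large_part}, only Rule~\ref{rrule:twins} and the maximality of the packing.
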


\begin{proof}
    Because Reduction Rule~\ref{rrule:large_part} cannot be applied, we have that every 
    part of $C$ contains at most $(3k+3)$ vertices. 
    Suppose now that $C$ consists of more than $3k+5$ parts. If $(G,k)$ is a \emph{yes}-instance, 
    then the solution can only be adjacent to at most $2k$ of these parts. The complete multipartite
    graph consisting of the $k+5$ parts not adjacent to the solution is then a candidate to apply 
    Reduction Rule~\ref{rrule:large_complete_multipartite}. 

    Note that to find the multipartite subgraph to apply Reduction Rule~\ref{rrule:large_complete_multipartite}, we only have to check for each part 
    if the vertices in this part have the same neighborhood outside of $C$, and for the part that do, 
    find a maximum set of parts with the same neighborhood.
\end{proof}

\begin{lemma}\label{lem:mcc_number}
For any $s \in S$, $s$ is adjacent to at most one complete multipartite component of $G-S$. 
\end{lemma}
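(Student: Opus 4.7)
The plan is to argue by contradiction from the maximality of the packing $\mathcal{H}$. Assume $s \in S$ has neighbors $v_1 \in C_1$ and $v_2 \in C_2$ in two distinct complete multipartite components of $G - S$; by the convention following Theorem~\ref{th:structure}, each $C_i$ has at least three parts and in particular contains a triangle. The goal is to produce an induced paw in $G$ whose four edges are all absent from $E(\mathcal{H})$; adding this paw to $\mathcal{H}$ would then yield a strictly larger edge-disjoint paw packing, a contradiction.

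The main observation that makes the argument clean is that $E(\mathcal{H}) \subseteq E(G[S])$, since $S$ is by definition the vertex set of the paws in $\mathcal{H}$. Hence any edge of $G$ incident to $V(G) \setminus S$ is automatically outside $E(\mathcal{H})$, and it suffices to exhibit an induced paw all of whose edges touch $C_1 \cup C_2$; no further bookkeeping about the internal structure of $\mathcal{H}$ is needed.

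The exhibition is a case split on how $N(s)$ meets $C_1$. If $N(s) \cap C_1$ is contained in a single part $P_1$ of $C_1$, I pick $u, w$ from two further parts of $C_1$ (available because $C_1$ has at least three parts); the multipartite structure makes $\{v_1, u, w\}$ an induced triangle in $C_1$, while neither $u$ nor $w$ is adjacent to $s$ by the case assumption, so $\{s, v_1, u, w\}$ induces a paw with pendant $s$ attached to $v_1$. Otherwise $N(s) \cap C_1$ meets two different parts of $C_1$ and so contains adjacent vertices $a, b$; combining them with $v_2$ yields the induced paw $\{s, a, b, v_2\}$, because $v_2 \in C_2$ is nonadjacent to $a, b \in C_1$ (distinct components of $G - S$). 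In either case every edge of the exhibited paw is incident to a vertex of $V(G) \setminus S$, hence avoids $E(\mathcal{H})$, and we may extend the packing, contradicting maximality. The only real obstacle is choosing the right dichotomy on $N(s) \cap C_1$; once that is in place the multipartite structure of $C_1$ and $C_2$ supplies the required triangle and pendant essentially for free.
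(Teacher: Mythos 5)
Your proof is correct and follows essentially the same route as the paper: both argue that a maximal edge-disjoint paw packing forces every triangle through a neighbor of $s$ in $C_1$ to send two vertices back to $s$, and then build an induced paw all of whose edges leave $S$ (pendant $s$ in one case, pendant $v_2\in C_2$ in the other), contradicting maximality of $\mathcal{H}$. Your part-based case split on $N(s)\cap C_1$ is just a slightly more explicit packaging of the paper's triangle-based case analysis.
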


\begin{proof}
    Suppose $s \in S$ is adjacent to two complete multipartite components $C_1$ and $C_2$. 
    Let $x$ be a vertex of $C_1$ adjacent to $s$. By definition of $C_1$, there exist vertices 
    $y$ and $z$ in $C_1$ such that $x,y,z$ is a triangle. This implies that one of $y$ and $z$ 
    has to be adjacent to $s$ or it would yield a paw without any edge in $S$ which is not possible
    by definition of $\mathcal{H}$. 
    
    Suppose now that $y$ is adjacent to $s$ (the case $x$ is adjacent to $s$ is identical). 
    Now let $c_2$ be a vertex of $C_2$ adjacent to $s$. Because 
    $C_1$ and $C_2$ are two different components, $c_2$ cannot be adjacent to either $c_1$ or $y$, 
    which means that $s,c_1, c_2$ and $y$ form  paw without any edge in $S$, a contradiction.
\end{proof}

The next section is devoted to proving that, if there exists a solution $A$, then we can assume
that any complete multipartite component of $G\Delta A$ only contains vertices at distance $5$ from $S$. 

\section{Bounding the Diameter of Relevant Vertices}

Let $A$ denote a solution such that the sizes of the multipartite components in $G \Delta A$ are minimal. 
In this section, $C$ will denote a complete multipartite component of $G\Delta A$, 
and $C_1, C_2, \dots, C_r$ the parts of $C$.
For any $i \in [r]$ and $j$, let $C_{i,j}$ denote the set of vertices of $C_i$ which 
are at distance $j$ of $S$ and $\overline{C_{i,j}} = \bigcup_{t \not = i} C_{t,j}$.

\begin{lemma}\label{lem:cc1}
    For any $j \geq 4$, and any $i \in [r]$, if $C_{i,0} \cup C_{i,1}$ is non empty, then $C_{i,j}$
    is. 
\end{lemma}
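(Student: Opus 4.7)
The plan is to argue by contradiction using the minimality of the sizes of the complete multipartite components in $G \Delta A$. Suppose there exist $i \in [r]$ and $j_0 \geq 4$ with $C_{i,0} \cup C_{i,1} \neq \emptyset$ while $C_{i,j_0} = \emptyset$, and fix $x \in C_{i,0} \cup C_{i,1}$. I will produce a modified edit set $A'$ with $|A'| \leq |A|$, $G \Delta A'$ still paw-free, and whose complete multipartite components are strictly smaller than those of $G \Delta A$, contradicting the choice of $A$.

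The starting observation is a purely metric one. Because $x$ lies at $G$-distance at most $1$ from $S$, any vertex $v \in V(G)$ at $G$-distance at least $3$ from $S$ satisfies $d_G(x,v) \geq 2$, and in particular $xv \notin E(G)$. Consequently, for every such $v \in C \setminus C_i$ the edge $xv$, which must belong to $G \Delta A$ by the complete multipartite structure of $C$, lies in $A$. The hypothesis $j_0 \geq 4$ together with $C_{i,j_0} = \emptyset$ forces every vertex of $C$ at distance $j_0$ from $S$ to lie in some part $C_t$ with $t \neq i$, which guarantees a nonempty supply of such ``free'' $A$-edges incident to $x$ that we can release.

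The candidate modification is to detach $C_{i,0} \cup C_{i,1}$ from $C \setminus C_i$: let $F$ be the set of $G \Delta A$-edges between these two vertex sets and set $A' := A \Delta F$. In $G \Delta A'$ the set $C_{i,0} \cup C_{i,1}$ is isolated from the rest of $C$ and forms an independent set (trivially paw-free), while $C \setminus (C_{i,0} \cup C_{i,1})$ induces a complete multipartite subgraph of $C$ (paw-free). Since only edges of $G \Delta A$ are removed, no paw can appear outside $C$ either. For the edge count, every element of $F \cap A$ removed from $A$ is a pair at $G$-distance at least $2$, and the metric observation combined with the guaranteed existence of a vertex of $C \setminus C_i$ at distance $\geq 3$ from $S$ provides at least $|C_{i,0}\cup C_{i,1}|$ such removals; against this, every element of $F \cap (E(G)\setminus A)$ that enters $A'$ corresponds to a near-to-near pair, and its count is controlled by the complete multipartite structure of $C$ restricted to small-distance vertices together with the prior reduction rules.

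The main obstacle is precisely this edge accounting: proving $|F \cap A| \geq |F \setminus A|$, which is what secures $|A'| \leq |A|$. This is where the threshold $j_0 \geq 4$ is crucial, because it produces the $G$-distance gap of at least $3$ between $x$ and the far part, guaranteeing enough $A$-edges on the chopping block to pay for any $G$-edges forced into $A'$. Once the accounting is in hand, the complete multipartite component of $G \Delta A'$ inheriting the far side of $C$ is strictly smaller than $|C|$, which contradicts the minimality of the component sizes in $G \Delta A$ and finishes the proof.
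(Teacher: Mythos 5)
Your contradiction hypothesis is inverted. The lemma (as its proof and its subsequent use make clear: ``vertices in $S_j$ and $S_0\cup S_1$ belong to different parts'') asserts that $C_{i,0}\cup C_{i,1}$ and $C_{i,j}$ cannot \emph{both} be nonempty when $j\geq 4$, so the correct assumption for a contradiction is $C_{i,0}\cup C_{i,1}\neq\emptyset$ \emph{and} $C_{i,j_0}\neq\emptyset$. You instead assume $C_{i,j_0}=\emptyset$ and try to refute it, which cannot work: a component lying entirely within distance $2$ of $S$, with $C_{i,0}\cup C_{i,1}\neq\emptyset$ and all far level sets empty, is perfectly compatible with the minimality of $A$, so no contradiction is available from that hypothesis. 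Relatedly, your claim that $C_{i,j_0}=\emptyset$ ``guarantees a nonempty supply of free $A$-edges incident to $x$'' is unjustified: under your hypothesis $C$ need not contain any vertex at distance $j_0$ at all.

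Independently of how the hypothesis is stated, the step you yourself flag as ``the main obstacle'' --- the accounting $|F\cap A|\geq|F\setminus A|$ that secures $|A'|\leq|A|$ --- is precisely the content of the paper's proof, and you give no argument for it. The distance gap only tells you that every $G\Delta A$-edge from $C_{i,0}\cup C_{i,1}$ to a vertex of another part at distance at least $3$ from $S$ lies in $A$; it says nothing about how many such vertices exist, while the $G$-edges you must delete number up to $|C_{i,0}\cup C_{i,1}|\cdot|\overline{C_{i,0}}\cup\overline{C_{i,1}}\cup\overline{C_{i,2}}|$, a quantity not bounded by the reduction rules. The paper pays for this with a preliminary exchange argument that uses exactly the nonemptiness of $C_{i,j}$ (the far vertices of part $i$ itself, which your hypothesis assumes away): since those vertices have no $G$-edges to $\overline{C_{i,0}}\cup\overline{C_{i,1}}\cup\overline{C_{i,2}}$, all such pairs lie in $A$, and one could instead detach $C_{i,j}$ by deleting its $G$-edges, which go only to $\overline{C_{i,j-1}}\cup\overline{C_{i,j}}\cup\overline{C_{i,j+1}}$; minimality of the component sizes then forces $|\overline{C_{i,j-1}}\cup\overline{C_{i,j}}\cup\overline{C_{i,j+1}}|\geq|\overline{C_{i,0}}\cup\overline{C_{i,1}}\cup\overline{C_{i,2}}|$, and only with this inequality in hand does detaching $C_{i,0}\cup C_{i,1}$ cost no more than it releases. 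Without that first exchange, your $A'$ may be strictly larger than $A$ (indeed larger than $k$), and the argument collapses.
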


\begin{proof}
    Suppose $C_{i,0} \cup C_{i,1}$ and  $C_{i,j}$ are non empty.

    Because $j \geq 4$, we know that $E(C_{i,j}, \overline{C_{i,0}} \cup 
    \overline{C_{i,1}} \cup \overline{C_{i,2}}) $ is empty. This implies that $A$ contains all 
    the pairs in $C_{i,j} \times (\overline{C_{i,0}} \cup \overline{C_{i,1}} \cup 
    \overline{C_{i,2}})$. However, vertices in $C_{i,j}$ can only be adjacent to vertices at distance 
    $i, i-1$ and $i+1$ from $S$, thus replacing all the edges in $C_{i,j} \times (\overline{C_{i,0}}
    \cup \overline{C_{i,1}} \cup \overline{C_{i,2}})$ by the pairs in $E(C_{i,j}, 
    \overline{C_{i,j-1}} \cup \overline{C_{i,j}} \cup \overline{C_{i,j+1}} )$ would also give a
    solution by disconnecting the vertices in $C_{i,j}$ from $C$. However, since $A$ is chosen such 
    that $|C|$ is minimal, it implies that:
    $ |\overline{C_{i,j-1}} \cup \overline{C_{i,j}} \cup \overline{C_{i,j+1}}|  \geq  
    |\overline{C_{i,0}} \cup \overline{C_{i,1}} \cup \overline{C_{i,2}}|$. 

    Now setting $$A' := \left(A \cup E(C_{i,0} \cup C_{i,1}, \overline{C_{i,0}} \cup 
    \overline{C_{i,1}} \cup \overline{C_{i,2}} ) \right)  \setminus \left( (C_{i,0} \cup C_{i,1} ) \times
    (\overline{C_{i,j-1}} \cup \overline{C_{i,j}} \cup \overline{C_{i,j+1}}) \right)  $$ gives an optimal 
    solution where $C$ doesn't contain $C_{i,0} \cup C_{i,1}$ and whose value is as good as $A$, 
    a contradiction.
\end{proof}
\noindent
For any $j$, let $S_j = \bigcup_{i \in [r]} C_{i,j} $. In other word, $S_j$ is the set of vertices
of $C$ at distance $j$ from $S$. The main implication of Lemma \ref{lem:cc1} is that, if $S_j$ is 
not empty for $j \geq 4$, then $A$ contains all the pair $S_i \times (S_0 \cup S_1)$. Indeed, 
it shows that vertices in $S_j$ and $S_0 \cup S_1$ belongs to different parts and thus must be adjacent
in $G \Delta A$. However, just by considering the distance to $S$ in $G$, these vertices cannot be
adjacent in $G$, and thus these pairs must be in $A$. This allows us to prove the following lemma. 

\begin{lemma}\label{lem:distance}
    For any $j \geq 5$, $S_j$ is empty.
\end{lemma}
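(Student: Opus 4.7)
The plan is to argue by contradiction, mimicking Lemma~\ref{lem:cc1}'s exchange argument but pushed one layer further from $S$. Suppose towards contradiction that $S_j\neq\emptyset$ for some $j\geq 5$; by downward induction on $j$ I may assume $j$ is the maximum such index, so every $S_{j'}$ with $j'>j$ is empty. Fix $i$ with $C_{i,j}\neq\emptyset$. Because $j\geq 5$, for any $v\in C_{i,j}$ and any $w\in\overline{C_{i,0}}\cup\overline{C_{i,1}}\cup\overline{C_{i,2}}\cup\overline{C_{i,3}}$ the $G$-distance from $v$ to $w$ is at least $j-3\geq 2$, so $vw\notin E(G)$; but $v$ and $w$ lie in different parts of $C$, so $vw\in E(G\Delta A)$, which forces $A$ to contain every pair in $C_{i,j}\times(\overline{C_{i,0}}\cup\overline{C_{i,1}}\cup\overline{C_{i,2}}\cup\overline{C_{i,3}})$. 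This strengthens the implication noted just before the lemma by one extra distance slice. Lemma~\ref{lem:cc1} additionally forces $C_{i,0}\cup C_{i,1}=\emptyset$.

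I then take the Lemma~\ref{lem:cc1}-style swap with the enlarged near slice, setting
\[
A' := \bigl(A\setminus (C_{i,j}\times(\overline{C_{i,0}}\cup\overline{C_{i,1}}\cup\overline{C_{i,2}}\cup\overline{C_{i,3}}))\bigr)\cup E\bigl(C_{i,j},\overline{C_{i,j-1}}\cup\overline{C_{i,j}}\cup\overline{C_{i,j+1}}\bigr).
\]
In $G\Delta A'$ every vertex of $C_{i,j}$ becomes non-adjacent to everything outside $C_i$: the distance-$\{0,1,2,3\}$ slice is disconnected by the pairs removed from $A$, the distance-$\{j-1,j,j+1\}$ slice by the $G$-edges added, and the intermediate slices $\overline{C_{i,k}}$ with $4\leq k\leq j-2$ are empty by the inductive hypothesis (each such $k$ was the maximum-index case of an earlier step in the downward induction). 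The vertices of $C_{i,j}$ therefore split off from the multipartite component that previously contained them, strictly shrinking it; paw-freeness of the resulting graph is preserved exactly as in the proof of Lemma~\ref{lem:cc1}.

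It remains to check $|A'|\leq|A|$; the number of pairs we remove is $|C_{i,j}|\cdot|\overline{C_{i,0}}\cup\cdots\cup\overline{C_{i,3}}|$ and the number added is at most $|C_{i,j}|\cdot|\overline{C_{i,j-1}}\cup\overline{C_{i,j}}\cup\overline{C_{i,j+1}}|$, so the required inequality is
\[
|\overline{C_{i,j-1}}\cup\overline{C_{i,j}}\cup\overline{C_{i,j+1}}| \leq |\overline{C_{i,0}}\cup\overline{C_{i,1}}\cup\overline{C_{i,2}}\cup\overline{C_{i,3}}|.
\]
Verifying this is the main obstacle. My intended argument is to perform the symmetric exchange, disconnecting the enlarged near slice $C_{i,0}\cup C_{i,1}\cup C_{i,2}\cup C_{i,3}$ of $C_i$ from the rest of $C$ (which is well-defined by the same distance-gap argument applied in the opposite direction), and then compare: the minimality of $A$ rules out one of the two candidate exchanges strictly shrinking the total multipartite-component size, which, combined with the two distance-gap identities, pins down the inequality between the two slices in the direction I need. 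Once this inequality is in hand, the original swap strictly decreases the multipartite-component size while keeping $|A'|\leq|A|$, contradicting the minimality of $A$.
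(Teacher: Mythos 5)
There are two genuine gaps. First, your claim that the intermediate slices $\overline{C_{i,k}}$ with $4\le k\le j-2$ are empty does not follow from taking $j$ maximal: maximality (equivalently, the downward-induction hypothesis) only gives $S_{j'}=\emptyset$ for $j'>j$ and says nothing about slices at distance between $4$ and $j-2$. Hence for $j\ge 6$ the pairs of $A$ joining $C_{i,j}$ to those slices survive in $A'$, so $C_{i,j}$ is \emph{not} split off from $C$, and neither paw-freeness of $G\Delta A'$ nor the strict shrinking of the component is established (the issue disappears only when the maximal nonempty index is exactly $5$). Second, and more importantly, the inequality $|\overline{C_{i,j-1}}\cup\overline{C_{i,j}}\cup\overline{C_{i,j+1}}|\le|\overline{C_{i,0}}\cup\overline{C_{i,1}}\cup\overline{C_{i,2}}\cup\overline{C_{i,3}}|$, which you yourself identify as the crux, is never proved, and the sketched ``symmetric exchange'' cannot deliver it: minimality-of-$A$ arguments of the Lemma~\ref{lem:cc1} type only ever yield the \emph{opposite} direction (if an exchange would strictly shrink a multipartite component, its cost must be at least what it saves, i.e.\ the far side is at least as large as the near side). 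Enlarging the near side by the distance-$3$ slice provides no mechanism to reverse that inequality, so the contradiction is not reached.

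The paper closes exactly this gap by a different route. It works with the whole slices $S_j$ rather than within one part: the exchange that would disconnect $S_5$ (remove the forced pairs $S_5\times(S_0\cup S_1)$ from $A$, add $E_G(S_5,S_4)$) combined with the choice of $A$ yields $|E_G(S_5,S_4)|\ge |S_5|\,|S_0\cup S_1|$ and hence $|S_4|\ge|S_0\cup S_1|$. Since Lemma~\ref{lem:cc1} also forces $S_4\times(S_0\cup S_1)\subseteq A$, the solution already contains at least $|S_0\cup S_1|^2$ such pairs, and these pay for a \emph{second, different} modification: separate $S_0$ from $S_1$ and drop all pairs of $A$ incident to $\bigcup_{j\ge 2}S_j$. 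This keeps $|A'|\le|A|$ and strictly shrinks the multipartite component containing $S_0$, contradicting the choice of $A$. It is this second exchange (splitting $S_0$ from $S_1$, financed by the forced pairs $S_4\times(S_0\cup S_1)$), rather than a direct far-versus-near comparison within one part, that your proposal is missing.
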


\begin{proof}
    Suppose $S_4$ and $S_5$ are non empty. By Lemma \ref{lem:cc1}, we know that the vertices in 
    $S_5$ and $S_0 \cup S_1$ belong to different parts of the complete multipartite component. 
    This implies that $A$ contains 
    $S_5 \times (S_0 \cup S_1)$. However, removing these pairs from $A$, as well as all pairs containing 
    a vertex of $C$ at distance more than 6 from $S$, and adding $E_G(S_5, S_4)$ also 
    yields a solution by disconnecting $S_5$ from the multipartite component. 
    By optimality of $A$, this implies that $E_G(S_5, S_4) \geq |S_5| |S_0 \cup 
    S_1|$ and thus $|S_4| \geq |S_1 \cup S_0|$. 
    Now again by Lemma \ref{lem:cc1}, we have that $A$ contains $S_4 \times (S_0 \cup S_1)$. 
    However, $|S_4| \geq |S_1 \cup S_0|$ so it means that 
    $|S_1 \cup S_0|^2 \leq |S_4| |S_1 \cup S_0|$. Let $A'$ be the solution obtained from $A$ 
    by disconnecting $S_1$ from $S_0$ and removing all pairs adjacent to the sets $S_j$ for 
    $j \geq 2$. Because  $|S_1 \cup S_0|^2 \leq |S_4| |S_1 \cup S_0|$, we have that $|A'| \leq |A|$
    and the multipartite component containing $S_0$ is strictly smaller in $G \Delta A'$ than 
    in $G \Delta A$ while the other remain exactly the same, which is a contradiction. 
\end{proof}

\section{Triangle-Free Components}

Before proving our main result let us prove the following lemma, which will be useful in bounding the
number of vertices outside of $S$.

\begin{lemma}\label{lem:degre_trianglefree}
    If $x \in G$ has at least $4k + 6$ neighbors belonging to triangle-free 
    components of $G-S$, then there is no solution $A$ such that $x$ belongs to a complete 
    multipartite component of $G \Delta A$. 
\end{lemma}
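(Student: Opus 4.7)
The plan is to argue by contradiction: assume some solution $A$ of size at most $k$ places $x$ in a complete multipartite component $C$ of $G\Delta A$, with $x$ in a part $P$ of $C$. Under the standing assumption that Reduction Rule~\ref{rrule:twins} is no longer applicable, I aim to exhibit an independent set of $k+3$ vertices of $G$ with a common neighborhood, yielding a contradiction.

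First I would isolate a large ``untouched'' set of neighbors. Since $|A|\le k$ touches at most $2k$ vertices, among the $\ge 4k+6$ neighbors of $x$ in triangle-free components of $G-S$ at least $2k+6$ are incident to no edge of $A$; call this set $N^*$. For any pair of untouched vertices the adjacencies in $G$ and in $G\Delta A$ coincide, because no edge of $A$ can touch either endpoint. Consequently $xy$ remains an edge of $G\Delta A$ for every $y\in N^*$, and thus $y$ lies in a part of $C$ different from $P$.

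The main structural step is to show that the vertices of $N^*$ lie in at most two parts of $C$. Suppose for contradiction that three of them, $y_1,y_2,y_3$, occupy three pairwise distinct parts (each different from $P$). In $G\Delta A$ they are then pairwise adjacent, and hence, being untouched, pairwise adjacent in $G$. The edges $y_1y_2$ and $y_2y_3$ force $y_1,y_2,y_3$ to lie in a common connected component of $G-S$; but this component is triangle-free by hypothesis, so the triangle $y_1y_2y_3$ is a contradiction.

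Pigeonhole now yields a part $Q\neq P$ of $C$ with $|Q\cap N^*|\ge k+3$. These $k+3$ vertices are pairwise non-adjacent in $C$, hence in $G\Delta A$, and hence (untouched) in $G$. Moreover, since $C$ is a component of $G\Delta A$ and each $y\in Q$ is complete to $C\setminus Q$ inside $C$, we have $N_G(y)=N_{G\Delta A}(y)=C\setminus Q$ for every $y\in Q\cap N^*$; thus they form an independent set of $k+3$ vertices with a common neighborhood in $G$. This is precisely a configuration to which Reduction Rule~\ref{rrule:twins} would apply, contradicting its non-applicability. The main delicate point is the bookkeeping between the $G\Delta A$-side (complete multipartite with its part decomposition) and the $G$-side (triangle-free components of $G-S$); once one notices that untouched pairs see the same adjacency in $G$ and in $G\Delta A$, the triangle-free hypothesis closes the three-parts configuration cleanly.
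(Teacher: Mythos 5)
Your proposal is correct and follows essentially the same route as the paper: isolate the $\ge 2k+6$ neighbors untouched by $A$, use triangle-freeness of their components in $G-S$ to confine them to at most two parts of the multipartite component, and pigeonhole $k+3$ of them into one part to contradict the non-applicability of Reduction Rule~\ref{rrule:twins}. The extra bookkeeping you supply (the three-parts triangle argument and the identification $N_G(y)=N_{G\Delta A}(y)=C\setminus Q$) just makes explicit what the paper states more tersely.
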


\begin{proof}
    Let $T$ denote the set of neighbors of $x$ belonging to triangle-free components of $G-S$.
    Suppose $x$ belongs to a complete multipartite component $C$ of $G \Delta A$. First note that 
    at least $2k+6$ of the vertices of $T$ will not be adjacent to any edge of $A$, which means that 
    their neighborhood in $G$ and $G \Delta A$ are the same and they belong to $C$ in $G \Delta A$. 
    Now because the vertices of $T$ belong to triangle-free components, it means that these $2k+6$ 
    vertices can only belong to two different
    parts of this multipartite component. In particular, at least $k+3$ of those belong to the same 
    part and thus have the exact same neighborhood in $G \Delta A$ and thus in $G$. This means that 
    Reduction Rule~\ref{rrule:twins} can be applied, which is a contradiction.  
\end{proof}

\begin{lemma}\label{lemma:triangle_1}
Suppose $(G,k)$ is a \emph{yes}-instance. Then there exists a set $S'$ of at most 
$( 4k+6)4k$ vertices such that if $x \not 
\in S'$ belongs to a triangle-free component of $G-S$, then $x$ doesn't belong to any triangle in 
$G$ using only one vertex of $S$. Moreover, there is a polynomial
time algorithm that either find this set or concludes that $(G,k)$ is a \emph{no}-instance.  
\end{lemma}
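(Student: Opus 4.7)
The plan is to build $S' = \bigcup_{s \in S} M_s$, where each $M_s$ is a set of at most $4k+6$ vertices; since $|S| \le 4k$, this yields $|S'| \le (4k+6)\cdot 4k$. For each $s \in S$, let $N_s$ denote the neighbors of $s$ lying in triangle-free components of $G-S$, and observe that a vertex $x \in N_s$ lies in a triangle of $G$ using only $s\in S$ if and only if $x$ has at least one neighbor $y \in N_s$ (in which case $\{s,x,y\}$ is such a triangle). Hence, letting $B_s$ be the set of non-isolated vertices of $G[N_s]$, it suffices to design $M_s \supseteq B_s$ of size at most $4k+6$ for each $s \in S$, or else certify that $(G,k)$ is a no-instance.

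If $|N_s| \le 4k+6$, we simply take $M_s := N_s$. Otherwise $|N_s| \ge 4k+7$, and Lemma~\ref{lem:degre_trianglefree} forces any solution $A$ to place $s$ in a triangle-free component of $G\Delta A$. Writing $R_s := \{v \in N_s : sv \in A\}$, this gives $|R_s|\le k$ and, since $N_{G\Delta A}(s)$ must be an independent set in $G\Delta A$, every $G$-edge with both endpoints in $N_s \setminus R_s$ must belong to $A$; in particular $|E_G(N_s \setminus R_s)| \le k - |R_s|$. The algorithm searches for some $R_s \subseteq N_s$ satisfying these two constraints, a bounded-budget covering problem solvable in polynomial time using the fact that $G[N_s]$ has maximum degree at most $4k+5$ by Reduction Rule~\ref{rrule:triangles_sharing_edge}. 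If no such $R_s$ exists we return no-instance; otherwise $M_s$ is defined as the union of $R_s$, the endpoints of the $\le k-|R_s|$ surviving edges in $G[N_s \setminus R_s]$, and the common $N_s$-neighbors of $s$ with each $r \in R_s$.

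The main technical obstacle will be proving that $|M_s| \le 4k+6$. The first two components already contribute at most $k + 2(k-|R_s|) \le 2k$ vertices, but the naive bound $|R_s|(4k+5)$ coming from Reduction Rule~\ref{rrule:triangles_sharing_edge} on the third component is too weak. The key structural facts to exploit are: (i) for each $r \in R_s$, the common neighbors of $s$ and $r$ lying in the triangle-free component of $r$ form an independent set in $G$ (otherwise a triangle would appear inside a triangle-free component); and (ii) Reduction Rule~\ref{rrule:twins} caps any independent family of twins at $k+2$ vertices. Combining these with a careful twin-class analysis of the ``distant'' bad vertices (those isolated in $G[N_s \setminus R_s]$ but non-isolated in $G[N_s]$) will tighten the third-component count to the desired bound. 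If the tightening fails for some $s$ -- equivalently, if the total bad vertices exceed $(4k+6)\cdot 4k$ -- the algorithm declares $(G,k)$ a no-instance, yielding the polynomial-time procedure asserted in the lemma.
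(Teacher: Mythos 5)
Your reduction of the task to covering, for each $s\in S$, the non-isolated vertices of $G[N_s]$ is correct, and the easy case $|N_s|\le 4k+6$ is fine, but the heart of the lemma --- getting a bound of $4k+6$ per vertex of $S$ in the remaining case --- is exactly the part you leave unproven. Your $M_s$ contains, for every $r\in R_s$, all common neighbours of $s$ and $r$ inside $N_s$, which a priori is only bounded by $|R_s|(4k+5)=\Theta(k^2)$, and the proposed rescue does not work: the common neighbours of $s$ and $r$ forming an independent set does not make them twins (they may have arbitrary, pairwise different neighbourhoods elsewhere), so Reduction Rule~\ref{rrule:twins} puts no cap on their number and the ``twin-class analysis'' has no evident route to the claimed $4k+6$. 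Two further claims are unsubstantiated: the search for $R_s$ with $|R_s|+|E_G(N_s\setminus R_s)|\le k$ is a prize-collecting vertex-cover-type problem for which you give no polynomial-time procedure (bounded degree does not obviously help), and the final fallback ``if the tightening fails, declare a no-instance'' is not justified --- nothing you establish shows that failure of your particular marking scheme implies that $(G,k)$ has no solution, which is what the last sentence of the lemma requires.

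The missing idea is the one the paper's proof is built on: the maximality of the edge-disjoint paw packing $\mathcal{H}$. If some $x$ in a triangle-free component $C$ of $G-S$ lies in a triangle with a single $s\in S$ and a vertex $y\in C$, then any neighbour $t\in C$ of $x$ must be adjacent to $s$ (adjacency to $y$ would create a triangle inside $C$, and non-adjacency to both $y$ and $s$ would give a paw edge-disjoint from $\mathcal{H}$), and this propagates along $C$, so the whole component satisfies $C\subseteq N(s)$. A maximal matching $\mathcal{M}$ in $C$ then finishes the job: if $|\mathcal{M}|>k$, the matched edges give more than $k$ edge-disjoint triangles through $s$, forcing $s$ into a complete multipartite component of any solution, and then $|C|\ge 4k+6$ would allow Rule~\ref{rrule:twins} to fire, so either $(G,k)$ is a no-instance or $|C|<4k+6$ and one keeps all of $C$; if $|\mathcal{M}|\le k$ one keeps only $V(\mathcal{M})$ (at most $2k$ vertices), the unmatched vertices of $C$ having all their neighbours in $S\cup V(\mathcal{M})$. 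Since, again by maximality of $\mathcal{H}$, each $s\in S$ can be attached to at most one such component, this gives the $(4k+6)\cdot 4k$ bound directly, with everything computable in polynomial time. Without a global, component-level argument of this kind, a purely local analysis of $N_s$ as in your proposal does not appear to reach an $O(k)$ bound per vertex of $S$.
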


\begin{proof}
    Let $x$ be a vertex belonging to a triangle-free component $C$ of $G-S$.
    Suppose that $x$ belongs to a triangle using only one vertex $s$ of $S$ and another vertex
    $y$ of $C$. Note first that $C$ is the only component of $G-S$ adjacent to $s$ or we would have a paw 
    using edges not in $S$. Suppose now that $t \in C$ is adjacent to $x$. Then $t$ must be adjacent 
    to either $y$ or $s$ or it would yield a paw using no edge in $S$. Thus, since $C$ is triangle 
    free, $t$ must be adjacent to $s$. The same argument would show that any vertex adjacent to $t$ 
    in $C$ must be adjacent to $s$ and thus the whole component $C$ is adjacent to $x$. 

    Let $\mathcal{M}$ be a maximal matching in $C$. If $\mathcal{M}$ consists of more than $k$ edges, 
    then it means that any solution $A$ to the instance $(G,k)$ puts $s$ in a complete multipartite 
    component. In particular if $|C| \geq 4k+6$, as $C \subseteq N(x)$ and $|A| \geq k$,
    we have that $2k+6$ of the vertices of $C$ are not adjacent to any edge of $A$ and belong to 
    the same complete multipartite component as $s$. Moreover, these vertices can only belong to 
    two different parts of this complete multipartite component (or we would have a triangle in $C$),
    and thus $k+3$ of them belong to the same part. However, since their neighborhood in $G$ 
    and $G \Delta A$ are identical, it means we could have applied Reduction Rule~\ref{rrule:twins}, so $(G,k)$ is a 
    \emph{no}-instance. So let $C'$ be defined as the vertices of $\mathcal{M}$ if $|\mathcal{M}| 
    \leq k$ and the full set $C$ if $\mathcal{M}$ if $|\mathcal{M}| \geq k$. Note that in the case
    where $|\mathcal{M}| \leq k$, the vertices in $C \setminus C'$ only have neighbors in $S \cup 
    C'$. 

    Let $S'$ be the union of the $C'$ for every such component $C$ where there exists a vertex 
    which belong to a triangle using one vertex from $s \in S$.
    Note that the number of those components $C$ is bounded by $|S|$. Indeed, $s$ cannot 
    be adjacent to any other component of $G -S$ or we have a paw using no edge from $S$ which is
    not possible. This implies that $|S'| \leq |S|( 4k+6)$.
\end{proof}

\section{Main Result}

We are now ready to prove our main theorem.
\begin{theorem}\label{th:editing}
    The paw-free editing problem has a kernel on $\bigO{k^6}$ vertices 
\end{theorem}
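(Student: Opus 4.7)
The plan is to assemble the preceding lemmas into a single kernelization algorithm, analyze its output size, and prove its safeness. I would proceed in three stages. First, greedily compute a maximal edge-disjoint packing $\mathcal{H}$ of paws in $G$. Since the paws in $\mathcal{H}$ share no edges, any solution needs at least $|\mathcal{H}|$ edits, so we answer \emph{no} if $|\mathcal{H}|>k$; otherwise $|S|\leq 4k$, where $S$ denotes the set of vertices appearing in $\mathcal{H}$. Then exhaustively apply Reduction Rules~\ref{rrule:twins}--\ref{rrule:large_part}, each in polynomial time.

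Second, bound the vertices of $G-S$ belonging to complete multipartite components. By Lemma~\ref{lem:mcc_number} each vertex of $S$ meets at most one such component, so at most $|S|\leq 4k$ of them are adjacent to $S$; by Lemma~\ref{lem:mcc_size} each has at most $(3k+3)(3k+5)=\mathcal{O}(k^2)$ vertices, contributing $\mathcal{O}(k^3)$ in total. A multipartite component of $G-S$ disjoint from $N(S)$ is a full component of $G$, and maximality of $\mathcal{H}$ guarantees it is paw-free, so it can be discarded.

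Third, bound the triangle-free components of $G-S$. Compute the set $S'$ of $\mathcal{O}(k^2)$ vertices from Lemma~\ref{lemma:triangle_1}, and run the BFS-style marking described in the introduction, setting $S_0:=S$ and, for each $i\in\{0,1,2,3,4\}$, defining $S_{i+1}$ by choosing for every $v\in S_i$ up to $4k+6$ arbitrary neighbors of $v$ lying in a triangle-free component of $G-S$ at distance exactly $i+1$ from $S$ in $G$. An easy induction gives $|S_i|=\mathcal{O}(k^{i+1})$, so $M:=S'\cup\bigcup_{i\leq 5}S_i$ has size $\mathcal{O}(k^6)$. The final reduction deletes every vertex of a triangle-free component of $G-S$ that is outside $M$, producing an instance on $\mathcal{O}(k^6)$ vertices.

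The main obstacle is proving that this last deletion is safe. The forward direction is immediate: any solution $A$ for $(G,k)$ induces a solution of size at most $k$ on the reduced instance, because paws are induced subgraphs of $G\Delta A$. For the reverse direction, given a solution $A'$ for the reduced instance I would argue that $A'$ itself already solves $(G,k)$. Suppose instead that $G\Delta A'$ contains a paw using some deleted vertex $v$. Since $A'$ is not incident to $v$, the edges of the paw touching $v$ are original edges of $G$, so $v$ lies in a triangle of $G\Delta A'$ involving at least one vertex of $S$. A case distinction on how many vertices of $S$ participate in this triangle reduces either to Lemma~\ref{lemma:triangle_1} (a single $S$-vertex forces $v\in S'\subseteq M$) or to Reduction Rule~\ref{rrule:triangles_sharing_edge} combined with Rule~\ref{rrule:twins} (any surviving $S$-edge has a bounded number of common triangle-free neighbors in $G-S$, and a twin substitution handles the rest). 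The remaining case — in which the offending triangle is created by edits of $A'$ — is handled by replacing $A'$ with a solution $A$ minimizing the multipartite-component sizes, applying Lemma~\ref{lem:distance} to bound the distance of $v$ from $S$ by $4$, and then using Lemma~\ref{lem:degre_trianglefree} along a shortest $v$-to-$S$ path to force some internal vertex to be in $S_i$ with its full quota of $4k+6$ triangle-free neighbors used, contradicting $v\notin S_{i+1}\subseteq M$. Combining the three stages bounds the kernel by $|S|+\mathcal{O}(k^3)+\mathcal{O}(k^6)=\mathcal{O}(k^6)$ vertices, as required.
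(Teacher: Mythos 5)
Your overall architecture matches the paper's (paw packing, Rules~\ref{rrule:twins}--\ref{rrule:large_part}, bounding the multipartite components via Lemmas~\ref{lem:mcc_size} and~\ref{lem:mcc_number}, the set $S'$ from Lemma~\ref{lemma:triangle_1}, BFS-style marking of $4k+6$ neighbors per level, and deletion of unmarked triangle-free vertices), but the safeness argument for the final deletion has a genuine gap. The paper's argument hinges on the structural fact that \emph{no triangle of $G$ uses a deleted vertex}, and to get this it keeps, besides $S'$, the set $S''$ of all vertices of triangle-free components of $G-S$ that form a triangle with \emph{two} vertices of $S$ (this set has size at most $|S|^2(4k+6)$ precisely because Rule~\ref{rrule:triangles_sharing_edge} no longer applies). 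Your marked set $M=S'\cup\bigcup_i S_i$ omits $S''$, and your substitute --- ``Rule~\ref{rrule:triangles_sharing_edge} combined with Rule~\ref{rrule:twins}, a twin substitution handles the rest'' --- does not close the hole: the at most $4k+6$ common neighbors of an adjacent pair $s_1,s_2\in S$ are \emph{not} necessarily retained by your marking (the $S_i$ pick arbitrary neighbors, not common neighbors of $S$-edges, and $S'$ only covers triangles with exactly one $S$-vertex). So a deleted vertex $v$ can lie in a triangle $vs_1s_2$ of $G$; if $A'$ keeps the edge $s_1s_2$, a fourth vertex adjacent to exactly one of $s_1,s_2,v$ yields a paw in $G\Delta A'$ that your argument neither rules out nor repairs (there need not be any kept common neighbor to substitute for $v$, and even if there were, its adjacency to the fourth vertex and to $A'$ could differ). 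Without $S''$ you also cannot assert, as the paper does, that in any paw of $G\Delta A'$ the deleted vertex must be the pendant and the triangle lies entirely in $G'$.

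Two further points in your case analysis are off. First, your opening inference ``the edges of the paw touching $v$ are original, so $v$ lies in a triangle of $G\Delta A'$ involving at least one vertex of $S$'' fails when $v$ is the pendant vertex of the paw (it lies in no triangle), and even when $v$ is a triangle vertex the third edge of that triangle may be an edge added by $A'$, so it need not be a triangle of $G$ at all; the clean statement you want is the paper's: with $S'$ \emph{and} $S''$ kept, $v$ is in no triangle of $G$, hence $v$ is the pendant and the triangle consists of kept vertices. Second, in your ``remaining case'' the contradiction must come from applying Lemma~\ref{lem:degre_trianglefree} to the triangle vertex of the paw adjacent to $v$: that vertex lies in a triangle of the paw-free graph $G'\Delta A$, hence in a complete multipartite component, is within distance $4$ of $S$ by Lemma~\ref{lem:distance} (for a minimal solution), and since $v$ was not marked for it, it has more than $4k+6$ neighbors in triangle-free components --- contradiction. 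Applying Lemma~\ref{lem:degre_trianglefree} to an ``internal vertex of a shortest $v$-to-$S$ path'' does not work as stated, because nothing guarantees such a vertex belongs to a complete multipartite component of $G'\Delta A$, which is the hypothesis the lemma contradicts. (Your marking only up to $S_5$ rather than the paper's $S_6$ is fine, since by Lemma~\ref{lem:distance} the relevant triangle vertices are within distance $4$ and their unmarked neighbors within distance $5$.)
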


\begin{proof}
    Let $(G,k)$ be an instance of paw-free editing. The algorithm first apply Reduction Rule~\ref{rrule:twins} repetitively. 
    Once Reduction Rule~\ref{rrule:twins} cannot be applied anymore, the algorithm computes $\mathcal{H}$ a maximal 
    packing of edge-disjoint paws. If $\mathcal{H}$ consists of more than $k$ paws, answer \emph{no}. 
    If this is not the case, let $S$ be the set of vertices belonging to a paw of $\mathcal{H}$. 
    $|S| \leq 4k$. Then the algorithm apply Reduction Rules~\ref{rrule:triangles_sharing_edge}~
    and~\ref{rrule:large_part} until either $k < 0$, in which case it 
    answers \emph{no}, or they cannot be applied anymore. 
    
    Because $\mathcal{H}$ is maximal, Theorem \ref{th:structure} implies that the components $G-S$ 
    are either triangle-free or complete multipartite. Let $C$ be a complete multipartite component.
    If $|C| \geq (3k +3)(3k+5) $, then Lemma \ref{lem:mcc_size} implies that the algorithm 
    can apply Reduction Rule~\ref{rrule:large_complete_multipartite} or answer \emph{no}.
    Moreover Lemma \ref{lem:mcc_number} implies that the number of 
    complete multipartite components adjacent to $S$ is bounded by $|S|$. Overall this implies that the number of
    vertices contained in complete multipartite components of $G - S$ adjacent to $S$ is bounded by 
    $4k(3k +3)(3k+5)$, or it is possible to apply Reduction Rule~\ref{rrule:large_complete_multipartite}.

    By applying Lemma \ref{lemma:triangle_1}, we either find out that $(G,k)$ is a \emph{no}-instance 
    or find a set $S'$ of at most $( 4k+6)4k$ vertices such that if $x \not 
    \in S'$ belongs to a triangle-free component of $G-S$, then $x$ doesn't belong to any triangle 
    in $G$ using only one vertex of $S$. 

    Because Reduction Rule~\ref{rrule:triangles_sharing_edge} cannot be applied anymore, it means 
    that for every pair of adjacent vertices 
    $s_1, s_2$ in $S$, the number of vertices in triangle-free components adjacent to both 
    $s_1$ and $s_2$ is bounded by $4k +6$. This means that, if $S''$ denotes the set of vertices in 
    a triangle-free component forming a triangle with 2 vertices of $S$, then 
    $|S''| \leq |S|^2 (4k+6)$.

    Then we construct recursively sets $S_0, S_1, \dots S_6$ such that $S_i$ is a subset of 
    vertices of $G$ at distance $i$ from $S$ as follows: 
    First we set $S_0:= S$ and then, for every $i \in \{0,\ldots,5\}$, we define $S_{i+1}$ by picking, for 
    every vertex $x$ of $S_i$, $4k + 6$ neighbors of $x$ at distance $i+1$ from $S$ in $G$ and 
    belonging to a triangle-free component of $G-S$. Note that $|\bigcup S_i| = \bigO{k^6}$.

    Let $G'$ be the graph induced on $G$ by $S$, $S'$, $S''$ the $S_i$ and all the complete multipartite 
    components of $G-S$ adjancent to $S$. Note that, by construction of $S'$ and $S''$, there is no triangle in $G$ 
    using a vertex which is not in $G'$.
    We claim that $(G',k)$ has a solution if and only if $(G,k)$ has a solution. 
    As $G'$ is a subgraph of $G$, it is clear that if $(G,k)$
    has a solution, then so does $(G',k)$. Suppose now that $(G',k)$ has a solution $A$, but 
    $(G,k)$ does not have a solution. In particular, it implies that $G\Delta A$ is not paw-free.
    Because of Lemma \ref{lem:distance}, we can assume that no complete multipartite component of 
    $G' \Delta A$ has a vertex at distance $5$ from $S$ and that $A$ is minimal. 
    Let $x_1, x_2, x_3, x_4$ form a paw in $G \Delta A$, with $x_1, x_2, x_3$ being the triangle. 
    One of the $x_i$ must be a vertex which has not been marked during the construction of the 
    $S_i$. Moreover, since $G'$ contains all the triangles of $G$, it means that 
    $x_1, x_2$ and $x_3$ belong to $G'$ and $x_4$ doesn't. 
    It also means that $x_1, x_2$ and $x_3$ belong to a complete multipartite 
    component of $G' \Delta A$ and $x_4$ is adjacent to one of these vertices, say $x_1$. 
    Since $x_1$ is at distance less than $5$ from 
    $S$, it means that during the marking process $x_4$ was not marked for $x_1$. But this means
    that $x_1$ has more than $4k+6$ neighbors in triangle-free components of $G' - S$.
    However, Lemma \ref{lem:degre_trianglefree} implies that $x_1$ cannot belong 
    to a complete multipartite component of $G' \Delta A$, which is a contradiction. 
\end{proof}

\section{Better Bounds for Deletion and Addition}
 In this section, we provide better kernels for paw-deletion and 
 paw-addition. Let us start with the deletion problem, where the proof is quite similar to the one of 
 Theorem \ref{th:editing}, with the difference that we only keep vertices of the triangle-free components which are at distance one from $S$.  

 \begin{theorem}
    The paw-free deletion problem admits a kernel of size $\bigO{k^3}$.
 \end{theorem}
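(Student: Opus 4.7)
The plan is to mirror the proof of Theorem \ref{th:editing} with one structural simplification: since edge deletion cannot introduce new edges and hence cannot create new triangles, every triangle of $G - A$ is already a triangle of $G$. This allows the elaborate marking up to distance five in the editing proof to collapse to marking only at distance one.

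I would first carry out the same preprocessing as in Theorem \ref{th:editing}: exhaustively apply Reduction Rule \ref{rrule:twins}, compute a maximal edge-disjoint paw packing $\mathcal{H}$ (returning \emph{no} if $|\mathcal{H}| > k$), let $S := V(\mathcal{H})$ with $|S| \leq 4k$, and apply Reduction Rules \ref{rrule:triangles_sharing_edge}, \ref{rrule:large_part}, and \ref{rrule:large_complete_multipartite}. Their safeness proofs carry over from the editing case because they rely on the twin rule. By Lemmas \ref{lem:mcc_size} and \ref{lem:mcc_number}, the vertices in complete multipartite components of $G - S$ adjacent to $S$ contribute $\bigO{k^3}$ in total, and Lemma \ref{lemma:triangle_1} provides auxiliary sets $S'$ and $S''$, of sizes $\bigO{k^2}$ and $\bigO{k^3}$, which together with $S$ and the complete multipartite components cover every triangle of $G$.

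The key deviation from the editing proof is that I would only mark at distance one. Concretely, for each $s \in S$, I mark $4k + 6$ neighbors of $s$ in triangle-free components of $G - S$; this adds $\bigO{k^2}$ vertices. The kernel $G'$ is then the induced subgraph on $S$, these marked neighbors, $S' \cup S''$, and the complete multipartite components of $G - S$ adjacent to $S$. Its total size is $\bigO{k^3}$, dominated by the $\bigO{k^3}$ contribution from complete multipartite components and $S''$.

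The safeness argument parallels the editing proof. The forward direction is immediate since $G' \subseteq G$. For the reverse, suppose $A$ solves $(G', k)$ and $G - A$ contains a paw $x_1 x_2 x_3 x_4$ with triangle $x_1 x_2 x_3$. Because deletion does not create triangles, $x_1 x_2 x_3$ is a triangle of $G$, so by the construction each of $x_1, x_2, x_3$ lies in $V(G')$ and the triangle appears in $G' - A$. Paw-freeness of $G' - A$ then forces $x_4 \notin V(G')$, so $x_4$ is an unmarked vertex of a triangle-free component of $G - S$. The main obstacle is deriving a contradiction here: following the final step of the proof of Theorem \ref{th:editing}, the unmarking of $x_4$ will imply that a vertex adjacent to the triangle has more than $4k + 6$ neighbors in triangle-free components of $G - S$, and Lemma \ref{lem:degre_trianglefree} then forbids this vertex from lying in a complete multipartite component of $G' - A$, contradicting its presence in the triangle $x_1 x_2 x_3$.
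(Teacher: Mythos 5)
Your construction coincides with the paper's own proof: the same preprocessing, the same sets $S'$ (from Lemma~\ref{lemma:triangle_1}) and $S''$ (which the paper obtains from the non-applicability of Reduction Rule~\ref{rrule:triangles_sharing_edge} rather than from Lemma~\ref{lemma:triangle_1}, with the same $\bigO{k^3}$ bound), the same distance-one marking of $4k+6$ neighbours for each vertex of $S$, the same $G'$, and the same final appeal to Lemma~\ref{lem:degre_trianglefree}; the paper likewise collapses the editing proof's multi-level marking to distance one, for exactly the reason you give (deletion creates no triangles). The size accounting also matches.

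The weak point is the last step, which is the crux of safeness. From ``$x_4$ is an unmarked vertex of a triangle-free component of $G-S$'' you jump to ``a vertex adjacent to the triangle has more than $4k+6$ neighbours in triangle-free components of $G-S$''. This does not follow as stated: neighbours were marked only for vertices of $S$, so the conclusion needs the triangle vertex to which $x_4$ attaches, say $x_3$, to lie in $S$ --- and this is precisely the claim the paper makes explicitly at this point ($x_3\in S$). Since $A$ only deletes edges of $G'$ and $x_4\notin V(G')$, the adjacencies of $x_4$ in $G\Delta A$ are those of $G$, so $x_3\in N_G(x_4)\subseteq S\cup C$, where $C$ is the triangle-free component of $G-S$ containing $x_4$; you must exclude $x_3\in C$, i.e.\ $x_3\in S'\cup S''$. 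If the triangle through $x_3$ uses exactly one vertex $s\in S$, the argument inside Lemma~\ref{lemma:triangle_1} shows every $C$-neighbour of $x_3$, in particular $x_4$, is adjacent to $s$ in $G$, which either contradicts the inducedness of the paw or lets you run the degree argument on $s$ instead; but when the triangle uses two vertices of $S$ and $x_4$ is adjacent to neither (a configuration not excluded by maximality of $\mathcal{H}$, since that paw in $G$ may share its $S$-edge with the packing), no vertex of the triangle is known to have had neighbours marked, and your stated implication gives nothing. Until you justify why the attachment vertex can be assumed to lie in $S$ (or otherwise has $4k+6$ neighbours in triangle-free components), the final contradiction is not established.
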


 \begin{proof}
    Let $(G,k)$ be an instance of paw-free deletion. First note that Reduction 
    Rules~\ref{rrule:twins}--\ref{rrule:large_part} are still 
    safe in this context, and Lemma \ref{lem:degre_trianglefree} still applies.
    Therefore the algorithm applies Reduction Rule~\ref{rrule:twins} until it cannot be applied 
    anymore. It then computes $\mathcal{H}$ a maximal packing of edge-disjoint paws. 
    If $\mathcal{H}$ consists of more than $k$ paws, answer \emph{no}. If this is not the case, let $S$ be 
    the set of vertices belonging to a paw of $\mathcal{H}$. $|S| \leq 4k$. Then the algorithm apply 
    Reduction Rules~\ref{rrule:triangles_sharing_edge}~and~\ref{rrule:large_part} until either $k < 0$, 
    in which case it answers \emph{no}, or they cannot be applied anymore. 

    Again, by possibly applying Reduction Rule~\ref{rrule:large_complete_multipartite}, we can assume
    that the set of vertices in all the multipartite components of $G - S$ adjacent to $S$ 
    is smaller than $4k(3k +3)(3k+5)$.
    By applying Lemma \ref{lemma:triangle_1}, we either find out that $(G,k)$ is a \emph{no}-instance 
    or find a set $S'$ of at most $( 4k+6)4k$ vertices such that if $x \not 
    \in S'$ belongs to a triangle-free component of $G-S$, then $x$ doesn't belong to any triangle 
    in $G$ using only one vertex of $S$. 

    Because Reduction Rule~\ref{rrule:triangles_sharing_edge} cannot be applied anymore, it means 
    that for every pair of adjacent vertices 
    $s_1, s_2$ in $S$, the number of vertices in triangle-free components adjacent to both 
    $s_1$ and $s_2$ is bounded by $4k +6$. This means that, if $S''$ denote the set of vertices in 
    a triangle-free component, forming a triangle with 2 vertices of $S$, then 
    $|S''| \leq |S|^2 (4k+6)$.

    Note also that Lemma \ref{lem:degre_trianglefree} still applies, and let $S_1$ be the set obtained 
    by picking for every vertex $s$ in $S$, $4k+6$ neighbors in triangle-free components of $G-S$.

    Let $G'$ be the graph induced on $G$ by $S$, $S'$, $S''$, $S_1$, as well as all the vertices on 
    complete multipartite components of $G-S$. 
    We want to show that $(G,k)$ has a solution if and only if $(G',k)$ has a solution. 
    Let $A$ be a solution of $(G',k)$ and suppose $G \Delta A$ has a paw $x_1, x_2, x_3, x_4$, 
    with $x_1, x_2, x_3$ being a triangle and $x_4$ being adjacent to $x_3$. Because of the choice 
    of the sets $S'$ and $S''$, all the triangle of $G$ are contained in $G'$. Note also that, since 
    the solution can only remove edges, $x_1, x_2, x_3$ is a triangle in $G$. This implies 
    that $x_3 \in S$ and $x_4$ was not picked for the $4k +6$ neighbors of $x_3$. In particular, this 
    means that $x_3$ has $4k+6$ neighbors which belong to a triangle-free component of $G'-S$ in 
    $G'$ and thus by Lemma \ref{lem:degre_trianglefree}, $x_3$ cannot belong to a complete multipartite 
    component of $G' \Delta A$. However, since $x_1, x_2$ and $x_3$ form a triangle in $G'\Delta A$, we reach 
    a contradiction.
 \end{proof}

\begin{theorem}
    The paw-free addition problem admits a kernel of size $\bigO{k^3}$.
\end{theorem}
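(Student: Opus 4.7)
The plan is to follow the blueprint of the $\bigO{k^3}$ deletion kernel, adapting it to the addition regime where no edge can be removed. Rules~\ref{rrule:twins} and~\ref{rrule:large_complete_multipartite} are pure vertex removals and remain safe after interpreting $G \Delta A$ as $G \cup A$ in their proofs. Rules~\ref{rrule:triangles_sharing_edge} and~\ref{rrule:large_part} instead conclude that the solution must contain a specific pre-existing edge; since such an edge cannot be deleted in the addition setting, I will replace them by the following no-instance declarations: (i) if some $s_1 s_2 \in E(G)$ has more than $4k+6$ common neighbors in the triangle-free components of $G-S$, or (ii) some complete multipartite component of $G-S$ has at least three parts and a part of size larger than $3k+3$, then return \emph{no}. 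Safeness is immediate from Lemma~\ref{lem:triangle_2} and the lemma supporting Rule~\ref{rrule:large_part}.

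After applying Rule~\ref{rrule:twins} exhaustively, I compute a maximal edge-disjoint paw packing $\mathcal{H}$ (returning \emph{no} if $|\mathcal{H}| > k$) and set $S := V(\mathcal{H})$, so $|S| \le 4k$. I then invoke the no-instance declarations above and apply Rule~\ref{rrule:large_complete_multipartite}. Lemmas~\ref{lem:mcc_size} and~\ref{lem:mcc_number} bound the complete multipartite components of $G-S$ adjacent to $S$ by $\bigO{k^3}$ vertices. Lemma~\ref{lemma:triangle_1} supplies a set $S'$ with $|S'| \le |S|(4k+6) = \bigO{k^2}$; the set $S''$ of vertices in triangle-free components of $G-S$ forming a triangle with two vertices of $S$ satisfies $|S''| \le |S|^2(4k+6) = \bigO{k^3}$ by (i); and for each $s \in S$ I mark $4k+6$ of its neighbors in triangle-free components of $G-S$, yielding $S_1$ with $|S_1| = \bigO{k^2}$. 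The output $G'$ is the subgraph of $G$ induced by $S \cup S' \cup S'' \cup S_1$ together with all complete multipartite components of $G-S$ adjacent to $S$; it has $\bigO{k^3}$ vertices.

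The main point to verify is the equivalence of $(G,k)$ and $(G',k)$. The forward direction is immediate since $G'$ is an induced subgraph of $G$. For the converse, let $A$ be a solution for $(G',k)$ and assume $G \cup A$ contains a paw on $x_1, x_2, x_3, x_4$ with triangle $x_1 x_2 x_3$ and pendant $x_4$ adjacent to $x_3$. Because adding edges cannot destroy an existing triangle, and every triangle of $G$ is already contained in $G'$ by construction of $S'$ and $S''$, the triangle $x_1 x_2 x_3$ lives in $G'$, so $x_1, x_2, x_3$ belong to a complete multipartite component of $G' \cup A$. Then $x_4 \notin V(G')$ with $x_3 x_4 \in E(G)$, so $x_4$ was not selected in constructing $S_1$ for $x_3$; this forces $x_3$ to have at least $4k+6$ neighbors in triangle-free components of $G'-S$, and Lemma~\ref{lem:degre_trianglefree} forbids $x_3$ from lying in a complete multipartite component of $G' \cup A$, a contradiction. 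The delicate step, and the main obstacle, is precisely this final pendant argument; it goes through cleanly in the addition setting because the triangle of the paw cannot be produced by $A$ and is therefore inherited from $G'$, where the degree hypothesis of Lemma~\ref{lem:degre_trianglefree} applies.
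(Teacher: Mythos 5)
Your reduction of Rules~\ref{rrule:triangles_sharing_edge} and~\ref{rrule:large_part} to no-instance declarations and the bound on the multipartite components are fine, but the converse direction of your equivalence has a genuine gap, and in fact the whole ``mark and discard'' strategy borrowed from the deletion kernel is unsound for addition. The pivotal claim ``the triangle of the paw cannot be produced by $A$'' is false here: in the deletion setting $G\Delta A\subseteq G$, so every triangle of $G\Delta A$ is a triangle of $G$ and hence lies in $G'$; in the addition setting $A$ \emph{adds} edges and can create new triangles. Concretely, $A$ may contain a pair $x_1x_2$ with $x_1,x_2\in V(G')$ (say both in $S$, non-adjacent in $G$) while some discarded vertex $x_3\notin V(G')$ of a triangle-free component is adjacent in $G$ to both $x_1$ and $x_2$. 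Such an $x_3$ need not be in $S''$ (membership there requires $x_1x_2\in E(G)$), nor in $S'$ or $S_1$, yet $x_1x_2x_3$ is a triangle of $G\cup A$ that is invisible in $G'$, and a pendant neighbour turns it into a paw your argument cannot exclude.

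The deeper problem is that for addition you cannot delete far-away vertices of triangle-free components at all: any addition-only solution must turn every connected component of $G$ that contains a triangle \emph{entirely} into a complete multipartite graph, so a long triangle-free ``tail'' hanging off such a component forces many additions. Discarding it can convert a no-instance into a yes-instance, so your reduced instance $(G',k)$ is not equivalent to $(G,k)$ regardless of how the proof is phrased. The paper's proof exploits exactly this rigidity instead of fighting it: it first deletes paw-free components, and then observes that in every remaining component $C_1$ the set $R_1$ of vertices lying in triangle-free components of $G-S$ must satisfy $|R_1|\le 4k+6$ (otherwise $2k+6$ of these vertices are untouched by any solution, lie in at most two parts of the resulting complete multipartite component, and Reduction Rule~\ref{rrule:twins} would apply), and there are at most $k$ non-paw-free components. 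Together with the $4k(3k+3)(3k+5)$ bound on the multipartite components of $G-S$ this already bounds the whole instance by $\bigO{k^3}$ with no marking sets $S'$, $S''$, $S_1$ needed.
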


\begin{proof}
    Again, Reduction Rules~\ref{rrule:twins}--\ref{rrule:large_part} are still safe in this context, 
    with the difference for Rules~\ref{rrule:triangles_sharing_edge} and \ref{rrule:large_part}
    that, instead of removing edges and decreasing $k$, we can directly conclude that $(G,k)$ is a \emph{no}-instance.
    Note also that a paw-free connected component can safely be removed from the graph. 
    
    So the algorithm start by removing all the paw-free components of $G$ and 
    applying Reduction Rule~\ref{rrule:twins} until it cannot be applied 
    anymore. It then computes $\mathcal{H}$ a maximal packing of edge-disjoint paws. 
    If $\mathcal{H}$ consists of more than $k$ paws, answer \emph{no}. If this is not the case, let $S$ be 
    the set of vertices belonging to a paw of $\mathcal{H}$. $|S| \leq 4k$. From now on we can 
    assume that Rules~\ref{rrule:triangles_sharing_edge}~and~\ref{rrule:large_part} cannot be applied.

    Again, by possibly applying Reduction Rule~\ref{rrule:large_complete_multipartite}, we can assume
    that the set of vertices in all the multipartite components of $G - S$ adjacent to $S$
    is smaller than $4k(3k +3)(3k+5)$.

    Consider a connected component $C_1$ of $G$. This component cannot be paw-free, or the algorithm
    would have removed it from the graph. So let $S_1 = C_1 \cap S$ and $R_1$ the vertices of $C_1$ contained in 
    triangle-free component of $G-S$. Because $C_1$ is not triangle-free, it means that any solution 
    $A$ to $(G,k)$ leaves $C_1$ as a complete multipartite component. In particular, it implies that 
    $R_1$ is smaller than $4k+6$. Indeed, if $R_1$ is bigger than $4k+6$, then $2k+6$ vertices 
    will have the same neighborhood in $G \Delta A$ as in $G$. Moreover, since $R_1$ is triangle-free, 
    it means that these vertices belong to at most $2$ parts of the complete multipartite component. 
    This implies that at least $k+3$ of these vertices belong to the same part and Rule~\ref{rrule:twins}
    applies. Moreover, since $G$ has at most $k$ connected component which are not paw-free, it implies 
    that the set of vertices contained in triangle-free components of $G-S$ is smaller than 
    $(4k+6)k$.

    Overall, it implies that our reduced instance has size at most $4k(3k +3)(3k+5) + (4k+6)k + 4k
    = \bigO{k^3}$, which ends the proof. 
\end{proof}

\section{Conclusion}
In this paper we studied \pawedit\ and gave a polynomial kernel of size $\bigO{k^{6}}$.  The only 
unresolved graphs $H$ on $4$ vertices, for which the kernelization complexity of \hedit\ problem remains open is claw. 
In fact, for this problem even the kernelization complexity of {\sc $H$-Edge Deletion} and {\sc $H$-Edge Addition} remain open. Settling the kernelization complexity might require using the power of structure theorem of claw free graphs. Thus,  a natural start here could be looking at editing/deletion/addition to basic graphs, on which structure theorem of claw free graphs is built. We leave these as natural directions to pursue.

\end{document}